\title{On the $h$-cobordism category. I}
\author{George Raptis}
\address{\newline 
G. Raptis \newline 
Fakult\"{a}t f\"{u}r Mathematik, Universit\"{a}t Regensburg, D-93040 Regensburg, Germany}
\email{georgios.raptis@mathematik.uni-regensburg.de}
\author{Wolfgang Steimle}
\address{\newline
W. Steimle \newline 
Institut f\"ur Mathematik,
Universit\"at Augburg,
D-86135 Augsburg, Germany}
\email{wolfgang.steimle@math.uni-augsburg.de}
\DeclareMathOperator{\colim}{colim}
\DeclareMathOperator{\Diff}{Diff}
\DeclareMathOperator{\Emb}{Emb}
\DeclareMathOperator*{\hocolim}{hocolim}
\DeclareMathOperator{\id}{id}
\DeclareMathOperator{\mor}{mor}
\DeclareMathOperator{\Wh}{Wh}
\DeclareMathOperator{\Imm}{Imm}
\DeclareMathOperator{\inter}{int}
\DeclareMathOperator{\Bun}{Bun}
\newcommand{\cobx}{\mathcal C}
\newcommand{\cobd}{\mathcal{C}_{d}}
\newcommand{\cobdn}{\mathcal{C}_{d, n}}
\newcommand{\cobdstr}{\mathcal{C}_\theta}
\newcommand{\cobdnstr}{\mathcal{C}_{\theta,n}}
\newcommand{\corn}[2]{\RR^{#1}_{\langle #2\rangle}}
\newcommand{\ob}{\operatorname{ob}}
\newcommand{\eps}{\varepsilon}
\newcommand{\inv}{^{-1}}
\newcommand{\hcobd}{\mathcal{C}_{\theta}^{\sim}}
\newcommand{\hcobdno}{\mathcal{C}_d^{\sim}}
\newcommand{\dell}{\partial}
\newcommand{\RR}{\mathbb{R}}
\newcommand{\T}{\mathcal{T}_M}
\newcommand{\thetah}{h_{\theta}}
\newcommand{\Hdiff}{\mathcal{H}}
\begin{document}

\theoremstyle{plain}
\newtheorem{thm}{Theorem}
\newtheorem{cor}[thm]{Corollary}
\newtheorem{lem}[thm]{Lemma}
\newtheorem{prop}[thm]{Proposition}
\newtheorem{claim}[thm]{Claim}
\theoremstyle{definition}
\newtheorem{defn}[thm]{Definition}
\newtheorem{obs}[thm]{Observation}
\newtheorem{constr}[thm]{Construction}

\theoremstyle{remark}
\newtheorem{rem}[thm]{Remark}

\setcounter{secnumdepth}{2}
\numberwithin{thm}{section}

\SelectTips{eu}{10}
\renewcommand{\theenumi}{\roman{enumi}}
\renewcommand{\labelenumi}{\textup{(}\theenumi\textup{)}}
\renewcommand{\theequation}{\arabic{equation}}

\begin{abstract}
We consider the topological category of $h$-cobordisms between smooth manifolds with boundary and compare its homotopy type 
with the standard $h$-cobordism space of a compact smooth manifold. 
\end{abstract}

\maketitle
\setcounter{tocdepth}{1}
\tableofcontents

\section{Introduction}

The spaces of $d$-dimensional smooth cobordisms between closed manifolds define a topological category  where the composition is given by concatenating cobordisms. The homotopy type of the 
classifying space of this cobordism category was identified with the infinite loop space of a Thom spectrum \cite{GMTW(2009)}. Analogous results have been obtained for variants of this cobordism category, 
defined either by considering general tangential structures, or by allowing cobordisms between manifolds with boundaries, corners, or Baas-Sullivan singularities (see, for example, \cite{Genauer(2008)} and \cite{Perlmutter}).  These 
results have turned out to be of fundamental importance in the study of diffeomorphism groups, characteristic classes, invertible field theories, and positive scalar curvature metrics. 

The composition of two $h$-cobordisms is again an $h$-cobordism, so the spaces of $d$-dimensional smooth $h$-cobordisms form a subcategory of the standard cobordism category. In this 
paper, we study the homotopy type of (the loop space of) this $h$-\emph{cobordism category}, in the setting of smooth manifolds with boundary, and compare this with the standard 
(stable) parametrized $h$-cobordism space. The latter space is closely connected with the algebraic $K$-theory of spaces as a consequence of the celebrated stable parametrized $h$-cobordism theorem \cite{WJR}.

\medskip

Recall that if $M^{d-1}$ and $N^{d-1}$ are compact smooth $(d-1)$-manifolds with boundary, a \emph{cobordism} from $M$ to $N$ is a compact smooth $d$-manifold $W^d$ with boundary, together with a decomposition of its boundary 
into three submanifolds of codimension $0$,
\[\dell W = M \cup \dell_h W \cup N,\]
such that $M$ and $N$ are disjoint and $M \amalg N$ meets $\dell_h W$ in the common boundary $\dell M \amalg \dell N = \dell (\dell_h  W)$. Such a cobordism $W$ is called an \emph{$h$-cobordism} if 
each of the inclusions
\[M \hookrightarrow W \hookleftarrow N, \quad  \dell M \hookrightarrow \dell_h  W \hookleftarrow \dell N\]
is a  homotopy equivalence. As  usual, there is a generalization to the case where the manifolds are endowed with tangential $\theta$-structures. This leads to the definition of the 
\emph{$\theta$-structured  $h$-cobordism category}, denoted by $\hcobd$.  (See section \ref{sec:h_cob_cat} for the details. For notational simplicity, we suppress from the notation any
symbol to indicate the existence of ``horizontal'' boundaries.) 

We emphasize that in our definition the horizontal boundary $\dell_h W$  can be an arbitrary $h$-cobordism between the closed manifolds $\dell M$ and $\dell N$. This is in contrast to 
the $h$-cobordisms considered in the definition of the standard $h$-cobordism space $\Hdiff(M)$,  where one has 
$\dell_h W = \dell M\times [0,1]$ and therefore $\dell M = \dell N$. An $h$-cobordism of this special form will be called a \emph{$\dell$-trivial $h$-cobordism} in this paper. The 
additional flexibility of our definition will be of great importance for the comparison between the classifying space of the $h$-cobordism category and the $h$-cobordism space. More precisely, the 
$h$-cobordism space $\Hdiff(M)$ of a compact smooth $(d-1)$-manifold $M$ is the classifying space for bundles of $\dell$-trivial $h$-cobordisms which agree with the trivial 
$M$-bundle on one side of the parametrized family of $\partial$-trivial $h$-cobordisms, while the restriction to the other side is an arbitrary bundle of compact smooth manifolds (see, for example, \cite{WJR}). Our main result compares, 
in the case $d\geq 7$, the  homotopy type of $B\hcobd$ with the homotopy types of $\Hdiff(M)$ for different $M$.

\begin{thm}\label{thm:main1}
Let $M^{d-1}$ be an object of $\hcobd$, where $d\geq 7$. For any $\dell$-trivial $h$-cobordism $(W;M,N)$ on $M$, there is a homotopy fiber sequence
\[\Emb_\theta^\sim(M, N) \to \Omega_M B\hcobd \to \Hdiff(M)\]
where the loop space is based at $M$, and the homotopy fiber is taken over $(W; M, N)$ in $\Hdiff(M)$.
\end{thm}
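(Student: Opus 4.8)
The plan is to identify the based loop space $\Omega_M B\hcobd$ with a moduli space of $\theta$-structured $h$-cobordisms of $M$ with itself, and then to split off $\Hdiff(M)$ as the moduli of the associated \emph{$\dell$-trivial} $h$-cobordisms, the homotopy fibre being the remaining gluing datum, which I would identify with $\Emb_\theta^\sim(M,N)$. For the first step I would run the standard ``group-completion / Quillen Theorem~B'' analysis of cobordism categories, in the spirit of \cite{GMTW(2009)} and \cite{Genauer(2008)} but considerably simplified by the fact that every morphism of $\hcobd$ is an $h$-cobordism. Concretely, the forgetful functor $q\colon M\!\downarrow\!\hcobd\to\hcobd$ from the under-category induces a map $B(M\!\downarrow\!\hcobd)\to B\hcobd$ whose source is contractible, since $\id_M\colon M\to M$ is an initial object of $M\!\downarrow\!\hcobd$; hence its homotopy fibre over the vertex $M$ is $\Omega_M B\hcobd$. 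Quillen's Theorem~B identifies this homotopy fibre with $B(q/M)$, and the comma category $q/M$ admits a deformation retraction --- given by the natural transformation sending a factorisation $(W\colon M\to N,\ u\colon N\to M)$ to $(u\circ W\colon M\to M,\ \id_M)$ --- onto the subcategory of objects of the form $(V,\id_M)$, which reduces to the space $\mathcal W_M$ of $\theta$-structured $h$-cobordisms $V$ from $M$ to $M$ with both ends identified with the object $M$. The hypothesis of Theorem~B that must be verified, namely that composition with a fixed $h$-cobordism is a weak equivalence of morphism spaces, is the first place where $d\geq7$ enters: the required cancellation is a parametrised handle-trading argument.

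For the second step, let $V\in\mathcal W_M$ have incoming end $M_0=M$, outgoing end $M_1=M$, and horizontal boundary an $h$-cobordism $\dell_h V$ of $\dell M$ with itself. I would ``bend the horizontal boundary into the top'': set $N:=M_1\cup_{\dell M_1}\dell_h V$, a compact $(d-1)$-manifold with $\dell N=\dell M$ and $N\simeq M$, and let $W$ be $V$ with its corner pushed into the interior, which is then a $\dell$-trivial $h$-cobordism from $M$ to $N$. Performed in families, this construction defines a map $\Omega_M B\hcobd\simeq\mathcal W_M\to\Hdiff(M)$, which one shows is a quasifibration; it remains to identify its homotopy fibre over the given $W$. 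That fibre records exactly the datum discarded by the construction: an identification of the bent-in outgoing copy of $M$ inside $N$. Since the identification comes from a morphism of $\hcobd$ --- an $h$-cobordism, not a diffeomorphism --- it amounts to a $\theta$-structured embedding $e\colon M\hookrightarrow N$ that is a homotopy equivalence, the complement of the embedded copy in $N$ being the $h$-cobordism $\dell_h V$ that was bent away. (That the complement of such an embedding is always an $h$-cobordism follows from Poincar\'e--Lefschetz duality.) Hence the homotopy fibre is $\Emb_\theta^\sim(M,N)$, and combining this with the first step yields the asserted homotopy fibre sequence.

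The technical heart of the argument is the first step, together with the requirement that everything above be done in families: the moduli spaces and the quasifibration must be arranged so that the two geometric operations --- bending the horizontal boundary into the top, and recognising the complement of an embedding-equivalence as an $h$-cobordism --- are mutually inverse and compatible with the categorical loop-space model, uniformly over a parameter space. The crux is the parametrised handle-cancellation input underlying the hypothesis of Theorem~B, which is also where $d\geq7$ --- ensuring that the relevant manifolds, in particular the horizontal boundaries $\dell_h W$, are high-dimensional enough for the $s$-cobordism theorem and its parametrised refinements --- becomes essential. Once the moduli description of $\Omega_M B\hcobd$ from the first step is in hand, the remaining steps are largely a matter of organising these geometric constructions carefully.
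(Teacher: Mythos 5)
Your overall architecture is the same as the paper's: first identify $\Omega_M B\hcobd$ with the endomorphism space $\hcobd(M,M)$ by a Theorem~B--type argument for the transport/under category, then map this space to $\Hdiff(M)$ by straightening the corners (absorbing $\dell_h$ into the free end), and identify the homotopy fiber with the space of $\theta h$-embeddings. However, two of your key verifications contain genuine gaps. First, $\hcobd$ is non-unital: the cylinder $M\times[0,1]$ is not an identity morphism, so $\id_M$ is \emph{not} an object of $M\downarrow\hcobd$ and your contractibility claim fails as stated. This is repairable (the paper shows $B(M\wr\hcobd)\simeq *$ by noting that precomposition with the cylinder is both a levelwise weak equivalence of nerves and homotopically constant), but it must be argued, and the whole Theorem~B mechanism has to be run for topological non-unital categories, which requires a point-set input you do not mention, namely that the source--target map of $\hcobd$ is a Serre fibration.

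Second, and more seriously, you propose to verify the Theorem~B hypothesis --- that composition with a fixed $h$-cobordism is a weak equivalence of morphism spaces --- by a ``parametrised handle-trading argument'' and later invoke ``parametrised refinements'' of the $s$-cobordism theorem. No such parametrized input is available at this level of generality, and trying to cancel handles in families is obstructed by exactly the higher invariants ($\Hdiff(M)$ itself, Whitehead spaces) that the theorem is designed to detect; as stated this step would not go through and is in any case circular in spirit. What is actually needed, and all the paper uses, is the \emph{classical} $s$-cobordism theorem: for $d\geq 7$ every morphism of $\hcobd$ is invertible in $\pi_0\hcobd$ (this already requires some care because $\dell_h W$ is an arbitrary $h$-cobordism of $\dell M$, not a product --- one first inverts $\dell_h W$, then reduces to the $\dell$-trivial case, and finally adjusts the $\theta$-structure), and then $W_*\colon \hcobd(M,N)\to\hcobd(M,P)$ is a homotopy equivalence formally, since composition with a cylinder is a homotopy equivalence by rescaling. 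Finally, your parenthetical that the complement of any embedding which is a homotopy equivalence is automatically an $h$-cobordism ``by Poincar\'e--Lefschetz duality'' is false (the fundamental group of the complement is not controlled; e.g.\ a compact contractible manifold with non-simply-connected boundary embedded in a disk of the same dimension). Fortunately it is also unnecessary: the $h$-condition on the complement is precisely the condition for the cut-open manifold to be a morphism of $\hcobd$, and it is built into the definition of $\Emb^\sim_\theta(M,N)$; the fiber identification should be phrased that way (the paper does this via an intermediate space $\HdiffM(M_0)$ of cornered $\dell$-trivial $h$-cobordisms equipped with an extra $h$-embedding of $M$ into the free end, together with fibration and $\theta$-structure bookkeeping that your sketch leaves implicit).
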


Here $\Emb_\theta^\sim(M, N)$ denotes the \emph{space of $\thetah$-embeddings} from $M$ into $N$. These are embeddings $\iota \colon M \to \inter(N)$ which have the property that both inclusions 
of $\iota(\dell M)$ and $\dell N$ into the complement $N-\inter\big(\iota(M)\big)$ are homotopy equivalences (so that, in particular, the embedding itself is a homotopy equivalence). 
Moreover, an identification is required between the original $\theta$-structure on $M$ and the one induced from $N$ through the embedding $\iota$, where $N$ is given the $\theta$-structure inherited from 
$M$ through the $h$-cobordism $W$. See section \ref{sec:h_cob_space} for the precise definition. 

\medskip

An interesting consequence of Theorem \ref{thm:main1} is in the case where the tangent bundle of $M$ is regarded as a $\theta$-structure and $M$ is a compact smooth $(d-1)$-manifold 
of lower handle dimension. Let $\theta_M = (\eps \oplus TM \to M)$, where $\eps$ denotes the trivial line bundle.

\begin{thm}\label{thm:main2}
Let $M$ be a compact connected smooth $(d-1)$-manifold of handle dimension $k$, regarded as an object of $\mathcal{C}^{\sim}_{\theta_M}$, where $d \geq 7$. Then there is a $(d-2k-2)$-connected map
\[
\Omega_M B\mathcal{C}^{\sim}_{\theta_M} \xrightarrow{(d-2k-2)-con} \Hdiff(M).
\]
\end{thm}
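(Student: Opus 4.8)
The plan is to deduce the theorem from Theorem~\ref{thm:main1}. Fix a $\partial$-trivial $h$-cobordism $(W;M,N)$ on $M$; by Theorem~\ref{thm:main1} the map $\Omega_M B\mathcal{C}^{\sim}_{\theta_M}\to\Hdiff(M)$ has homotopy fiber $\Emb^{\sim}_{\theta_M}(M,N)$ over $W$. Since every path component of $\Hdiff(M)$ contains the point corresponding to some $\partial$-trivial $h$-cobordism on $M$, it is enough to prove that $\Emb^{\sim}_{\theta_M}(M,N)$ is $(d-2k-3)$-connected for each such $(W;M,N)$: a homotopy fiber sequence with $(d-2k-3)$-connected fiber has $(d-2k-2)$-connected base map, which is precisely the assertion of Theorem~\ref{thm:main2}. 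We may assume $d-2k-2\ge 0$, since otherwise there is nothing to prove.

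So the entire content is the connectivity estimate for $\Emb^{\sim}_{\theta_M}(M,N)$. An $h$-embedding $e\colon M\hookrightarrow N$ decomposes $N$ as $e(M)$ glued along $e(\partial M)$ to an $h$-cobordism $C$ from $e(\partial M)$ to $\partial N$; in particular $N$ deformation retracts onto $e(M)$, so $e$ is a homotopy equivalence and the isomorphism $e^*TN\cong TM$ is determined up to contractible choice. Forgetting the smooth structure of $e$ and retaining only the underlying homotopy-theoretic and stable-bundle data therefore defines a map from $\Emb^{\sim}_{\theta_M}(M,N)$ to the space of ``formal $\theta_M h$-embeddings'' $M\to N$. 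The role of the particular structure $\theta_M=(\eps\oplus TM\to M)$ is that this space of formal data is weakly contractible: the structure map $N\to M$ coming from $W$ is a homotopy equivalence, so its space of homotopy-theoretic sections is contractible, and by tangentiality of $\theta_M$ the remaining stable-bundle data is contractible as well. It thus suffices to show that the forgetful map from $\Emb^{\sim}_{\theta_M}(M,N)$ to this weakly contractible space is $(d-2k-2)$-connected.

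To prove that, I would run a handle induction on $M$. Choose a handle decomposition of $M$ with all handle indices $\le k$, with associated filtration $M_0\subset M_1\subset\cdots\subset M_r=M$ in which $M_j$ is obtained from $M_{j-1}$ by attaching a single handle of index $i_j\le k$; this produces a tower of spaces interpolating between the space of formal data and $\Emb^{\sim}_{\theta_M}(M,N)$, and it is enough to bound each stage. The model stage is: given a $\theta_M h$-embedding of $M_{j-1}$ into $N$ together with the formal data over the next handle, the space of extensions over that handle reduces, after thickening the core disk using the $\theta_M$-structure, to the space of embeddings of the core $D^{i_j}$ into the $(d-1)$-manifold $N$ rel its prescribed boundary sphere and in the prescribed relative homotopy class. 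In the metastable range $2i_j<d-1$ --- which holds since $2i_j\le 2k\le d-2$ --- this space is highly connected by classical general position, i.e.\ the Whitney trick; taking the minimum of the resulting bounds over $i_j\le k$ gives $d-2k-2$.

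The main obstacle is this last step in its \emph{space-level} form, not merely on $\pi_0$ or $\pi_1$: one needs the parametrized version of general position for families of embedded disks (equivalently, the relevant convergence estimate for the Goodwillie--Weiss embedding tower, or an explicit multiple-disjunction argument), and the $\theta_M$-structures must be carried coherently through the whole induction so that the formal side stays weakly contractible and the comparison map is, at each stage, exactly the smooth-versus-formal one. Granting this, $\Emb^{\sim}_{\theta_M}(M,N)$ is $(d-2k-3)$-connected, and Theorem~\ref{thm:main1} delivers Theorem~\ref{thm:main2}.
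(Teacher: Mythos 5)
Your reduction to Theorem~\ref{thm:main1} is exactly what the paper does, and your target — showing $\Emb^\sim_{\theta_M}(M,N)$ is $(d-2k-3)$-connected — is Proposition~\ref{connectivity-result}. From there the routes diverge. The paper's argument is a short chain of comparisons: it quotes the metastable estimate that $\Emb(M,N)\to\Imm(M,N)$ is $(d-2k-2)$-connected directly from Goodwillie--Weiss \cite{Weiss(99),GW}; identifies $\Imm^\sim(M,N)\simeq\Bun^\sim(TM,TN)$ via Smale--Hirsch; records that the stabilization $\Bun^\sim(TM,TN)\to\Bun^\sim(\eps\oplus TM,\eps\oplus TN)$ is $(d-k-2)$-connected; and observes that post-composition with the $\theta_M$-structure $\Bun^\sim(\eps\oplus TM,\eps\oplus TN)\to\Bun^\sim(\eps\oplus TM,\eps\oplus TM)$ is a weak equivalence. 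The composite $s_{\theta_M,N}$ is then $\min(d-2k-2,\,d-k-2)=(d-2k-2)$-connected, so the homotopy fiber $\Emb^\sim_{\theta_M}(M,N)$ is $(d-2k-3)$-connected. You instead propose to re-derive the embedding-calculus input by a bespoke handle induction with a parametrized Whitney trick; this is how the cited estimate is ultimately proved, so it is not wrong, but it amounts to re-proving the quoted theorem, and, as you acknowledge yourself, the genuine content (the parametrized disjunction / higher excision estimate) is left as a placeholder — the very thing \cite{Weiss(99),GW} supply.

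There is also a concrete imprecision in your treatment of the tangential data. You assert that the space of ``formal $\theta_M h$-embeddings'' is weakly contractible on the grounds that the homotopy-theoretic section data is contractible and ``the remaining stable-bundle data is contractible.'' The natural formal target for an embedding, via embedding calculus or immersion theory, is the \emph{unstable} bundle-map space $\Bun^\sim(TM,TN)\simeq\Imm^\sim(M,N)$, and the corresponding $\theta_M$-formal space $\Imm^\sim_{\theta_M}(M,N)$ is not contractible: it is only the homotopy fiber of a $(d-k-2)$-connected stabilization map, hence merely $(d-k-3)$-connected. The final bound comes out the same because $d-k-2\geq d-2k-2$ for $k\geq 0$, but as written your reason for contractibility conflates stable and unstable bundle data and skips the stabilization estimate that the paper makes explicit. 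If you run the handle induction against the genuinely contractible target $\Bun(\eps\oplus TM,\eps\oplus TN)$ instead, you still need the $\Emb\to\Imm$ and stabilization steps to produce the comparison, so this step cannot actually be elided.

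\end{document}
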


To draw our final conclusion, we note that the domain of the map in Theorem \ref{thm:main2} may be stabilized by taking product with $[0,1]$ on objects and on morphisms, and then 
straightening the resulting corners (as discussed in the Appendix). This is compatible with the usual stabilization of the $h$-cobordism space in the target of the map. Therefore we conclude the following corollary as a consequence of 
Theorem \ref{thm:main2} and the stable parametrized $h$-cobordism theorem \cite{WJR}.

\begin{cor}\label{thm:cor}
Let $M$ be a compact connected smooth $(d-1)$-manifold. There is a homotopy equivalence
\[\hocolim_n \Omega_{M \times D^n} B\mathcal{C}^{\sim}_{\theta_{M \times D^n}} \simeq \Omega \Wh^{\operatorname{diff}}(M).\]
\end{cor}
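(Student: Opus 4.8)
The plan is to apply Theorem \ref{thm:main2} to the manifolds $M \times D^n$ for all $n$ simultaneously and then pass to the homotopy colimit, where the comparison maps become arbitrarily highly connected and hence a weak equivalence in the limit. First I would dispose of the hypothesis $d \geq 7$, which is present in Theorem \ref{thm:main2} but absent from the corollary: a sequential homotopy colimit depends only on a final segment of its tower, so replacing $M$ by $M \times D^{n_0}$ leaves the left-hand side unchanged (by cofinality of $\{n_0 + m\}_m$ in $\{n\}_n$), and it leaves the right-hand side unchanged as well, since $\Wh^{\operatorname{diff}}$ is a homotopy-invariant functor and $M \times D^{n_0} \simeq M$. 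Taking $n_0 \geq 7 - d$ I may thus assume from now on that $d \geq 7$.

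Next, set $M_n := M \times D^n$, a compact connected smooth $(d-1+n)$-manifold. A handle decomposition of $M$ by handles of index $\leq k := \operatorname{hd}(M)$ yields one of $M_n = M \times I^n$ upon multiplying each handle by $I^n$, so $M_n$ still has handle dimension at most $k$. Viewing $M_n$ as an object of $\mathcal{C}^{\sim}_{\theta_{M_n}}$, Theorem \ref{thm:main2}, applied with ambient dimension $d + n \geq 7$, provides a $(d + n - 2k - 2)$-connected map
\[
c_n \colon \Omega_{M_n} B\mathcal{C}^{\sim}_{\theta_{M_n}} \longrightarrow \Hdiff(M_n).
\]

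The key step is to verify that the $c_n$ form a map of towers. On the right, the structure maps are the usual stabilization maps $\Hdiff(M_n) \to \Hdiff(M_{n+1})$, $W \mapsto W \times I$ with corners straightened. On the left, the analogous $(-)\times I$ construction defines a functor $\mathcal{C}^{\sim}_{\theta_{M_n}} \to \mathcal{C}^{\sim}_{\theta_{M_{n+1}}}$, using the canonical identifications $M_n \times I \cong M_{n+1}$ and $\theta_{M_n} \oplus \eps \cong \theta_{M_{n+1}}$ after corner straightening, and hence a map on $\Omega B$. One must then show that the square relating $c_n$ and $c_{n+1}$ through these maps commutes up to homotopy; this should follow from the naturality of the homotopy fiber sequence of Theorem \ref{thm:main1} — of which $c_n$ is the instance for $M_n$ — with respect to product-with-$I$ morphisms. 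I expect this compatibility to be the only genuinely nontrivial point in the proof, and it should be read off from the construction of the map in the proof of Theorem \ref{thm:main1} rather than from its mere statement.

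Granting the compatibility, I would take homotopy colimits over $n$. Since a sequential mapping telescope commutes with homotopy groups and the connectivities $d + n - 2k - 2$ tend to $\infty$, the induced map
\[
\hocolim_n \Omega_{M_n} B\mathcal{C}^{\sim}_{\theta_{M_n}} \longrightarrow \hocolim_n \Hdiff(M_n)
\]
is a weak homotopy equivalence. Finally, $\hocolim_n \Hdiff(M \times D^n) = \hocolim_n \Hdiff(M \times I^n)$ is, by definition, the stable smooth $h$-cobordism space $\mathcal{H}^{\operatorname{Diff}}(M)$, which by the stable parametrized $h$-cobordism theorem \cite{WJR} is homotopy equivalent to $\Omega \Wh^{\operatorname{diff}}(M)$. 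Composing the three equivalences yields the corollary.
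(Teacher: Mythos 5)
Your proposal is correct and follows essentially the same route as the paper, whose entire proof is the one-sentence remark preceding the corollary: apply Theorem \ref{thm:main2} to $M \times D^n$, let the connectivity $d+n-2k-2$ tend to infinity under stabilization by $(-)\times I$ (with corners straightened), and identify $\hocolim_n \Hdiff(M\times D^n)$ with $\Omega\Wh^{\operatorname{diff}}(M)$ via the stable parametrized $h$-cobordism theorem of \cite{WJR}. In fact you supply more detail than the paper does, in particular the cofinality argument removing the $d\geq 7$ hypothesis and the explicit flagging of the (routine but unaddressed) compatibility of the comparison maps with the stabilization maps of the two towers.
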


Here $\Wh^{\operatorname{diff}}(M)$ denotes the smooth Whitehead space  of $M$, defined by means of algebraic $K$-theory (see \cite{WJR}). We note here that we work entirely in the 
smooth setting in this paper. Finally, we remark that we do not know a direct description of the resulting infinite--loop space structure on the left--hand side of the homotopy equivalence in Corollary \ref{thm:cor}. (Note that $B\mathcal{C}^{\sim}_\theta$ is an $E_\infty$-monoid under disjoint union, but it is typically not group--like.)

\medskip 

The strategy for the proof of these results is as follows. First, using the classical $h$-cobordism theorem, the category $\hcobd$ is shown to be a (non-unital) groupoid in a homotopical 
sense (Section \ref{sec:invertibility_hcob}). This is indeed the only place where we use the dimension restriction -- in particular, our techniques apply in any dimension if we restrict to the cobordism category of \emph{invertible} cobordisms instead. 
Using standard homotopical techniques discussed in Section \ref{sec:non_unital_categories}, it follows that the classifying space $B \hcobd$ is homotopy equivalent to a disjoint union of spaces of endomorphisms, one 
from each component. In Section \ref{sec:h_cob_space}, we analyze geometrically the homotopy type of an endomorphism space in $\hcobd$. The main idea is to trade the $\dell$-triviality of the $h$-cobordisms in $\Hdiff(M)$ with the condition that each $h$-cobordism in an endomorphism space has both ends fixed. This analysis leads to Theorem \ref{thm:main1}. Theorem \ref{thm:main2} follows by analyzing 
the homotopy types of the $\thetah$-embedding spaces under consideration. 

\medskip

In a sequel to this work, we aim to carry out a different analysis of the homotopy type of $B\hcobd$ and study directly its connection with algebraic $K$-theory.

\medskip 

\noindent \textbf{Acknowledgements.} The first named author was partially supported by the \emph{SFB 1085 -- Higher Invariants} (University of Regensburg) funded by the DFG. The second named author was partially supported by the DFG through the SPP 2026 ``Geometry at infinity".

\section{The $h$-cobordism category}\label{sec:h_cob_cat}

In this section, we recall the definition of the cobordism category $\cobdstr$ \cite{GMTW(2009), Genauer(2008)} and then define the $h$-cobordism category $\hcobd$ to be the subcategory of $h$-cobordisms. The definition 
of the cobordism category here uses a slight variation of the definition of tangential structure as defined in  \cite{GMTW(2009)} and \cite{Genauer(2008)}. We emphasize that \emph{objects (morphisms) 
are allowed to have boundaries (corners)}. Moreover, we will need to allow arbitrary tangential structures as these will play an important role in the proofs of our main results.

\medskip

Let $\RR_+:=[0,\infty)$, and $\corn{n}{k}:= \RR_+^k \times \RR^{n-k}\subset \RR^n.$

We call a subset $M\subset \corn{d-1+n}1$ a $(d-1)$-\emph{dimensional neatly embedded submanifold} if the following hold:
\begin{enumerate}
 \item $M\subset \corn{d-1+n}1$ is locally diffeomorphic to an open subset of $\corn{d-1}1\subset \corn{d-1+n}1$. In particular, $\dell M = M \cap \dell\corn{d-1+n}1$.
 \item There is $\epsilon>0$ such that
\[M \cap \bigl([0, \epsilon) \times \RR^{d-2+n}\bigr)= [0, \epsilon) \times \dell M,\]
\end{enumerate}
with the appropriate interpretation of the product on the right--hand side as a subspace of $\corn{d-1+n}1$.

\medskip

Fix a vector bundle $\theta = (V \to X)$ of rank $d$ over a space $X$. A \emph{$\theta$-structure} on a smooth $(d-1)$-dimensional manifold $M$ (possibly with boundary) 
is a map of vector bundles $l_M\colon \eps\oplus TM\to \theta$, that is, a map between the total spaces over a map of base spaces which restricts to a linear isomorphism 
in each fiber. Here $\eps$ denotes the trivial line bundle. 

\medskip

Note that the boundary of the manifold $[0,a]\times \corn n 1$ consists of three boundary faces, which we denote by
\begin{align*}
\dell_0([0,a]\times \corn n 1) & := \{0\}\times \corn n 1,\\
\dell_1([0,a]\times \corn n 1) &:= \{a\}\times \corn n 1,\\
\dell_h([0,a]\times \corn n 1) &:= [0,a]\times \dell \corn n 1.
\end{align*}

We call a subset $W\subset [0,a]\times \corn{d-1+n}1$ a $d$-\emph{dimensional neatly embedded submanifold} if the following hold:
\begin{enumerate}
 \item $W\subset [0,a]\times \corn{d-1+n}1$ is locally diffeomorphic to an open subset of $\corn d 2 \subset \corn {d+n}2$. In particular, $W$ is a smooth $d$-manifold, possibly with corners. 
 \item There is $\epsilon>0$ such that
\begin{enumerate}
\item $W \cap ([0, \epsilon) \times \corn{d-1 + n}{1}) = [0, \epsilon) \times \dell_0 W$,
\item $W \cap ((a - \epsilon, a] \times \corn{d-1 + n}{1}) = (a - \epsilon, a] \times \dell_1 W$,  and
\item $W \cap ([0, a] \times [0, \epsilon) \times \RR^{d-2+n})= [0, \epsilon) \times \partial_h W$.
\end{enumerate}
with the appropriate interpretation of the products on the right. Here we have used the notation 
$$\dell_i W :=  W\cap \dell_i([0,a]\times \corn{d-1 + n}1), \quad (i\in \{0,1,h\}).$$ 
\end{enumerate}
We call $\dell_h W$ the \emph{horizontal} boundary of $W$ and $\dell_0  W\amalg \dell_1 W$ the \emph{vertical} boundary of $W$.

\medskip

A $\theta$-structure on a smooth $d$-dimensional manifold $W$ (possibly with corners) is a map of vector bundles $l_W\colon TW\to \theta$. Note that such 
a $\theta$-structure induces a $\theta$-structure $l_{\dell_i W}$ on $\dell_i W$, $i\in \{0,1,h\}$, by means of the canonical splittings
\begin{align*}
 TW\vert_{\dell_0 W} &= T[0, \epsilon)\vert_{0} \times T(\dell_0 W)  \cong \eps\oplus T(\dell_0 W),\\
  TW\vert_{\dell_1 W} &= T(a - \epsilon, a]\vert_{a} \times T(\dell_1 W) \cong \eps\oplus T(\dell_1 W),\\
  TW\vert_{\dell_h W} &= T[0, \epsilon)\vert_{0} \times T(\dell_h W)  \cong \eps\oplus T(\dell_h W),
\end{align*}
induced by the standard trivialization of $T\RR$, sending $\dell/\dell x$ to 1.

\begin{defn} Let $\theta = (V \to X)$ be a vector bundle on a space $X$ of rank $d$. The \emph{$\theta$-cobordism category $\cobdnstr$ in $\corn{d+n}1$} is defined as follows:
\begin{itemize}
 \item An object is a pair $(M, l_M)$ where $M\subset \corn{d-1+n}1$ is a compact $(d-1)$-dimensional neatly embedded submanifold; $l_M$ is a $\theta$-structure on $M$.
 \item A morphism is a triple $(a, W, l_W)$ where $a>0$ is a real number;  $W\subset [0, a]\times \corn{d-1+n}1$ is a compact $d$-dimensional neatly embedded submanifold; 
 $l_W$ is a $\theta$-structure on $W$. 
 \item The source and target of a morphism $(a,W, l_W)$ are given by $(\dell_i W, l_{\dell_i W})$ for $i=0$ and $i=1$, respectively.
 \item Composition of morphisms is defined by 
 \[(a', W', l') \circ (a, W, l) := (a+a', W\cup \mathrm{sh}_a(W'), L)\]
where $\mathrm{sh}_a(W')$ denotes the embedding of $W'$ shifted by $\mathrm{sh}_a \colon \RR \to \RR$, $x \mapsto x + a$, in the first coordinate; $L$ is defined by the conditions $l\vert_{W}= l$ and 
$l\vert_{\mathrm{sh}_a(W')} = l' \circ D\mathrm{sh}_a\inv$. 
\end{itemize}
\end{defn}

We use the abbreviation $\cobdstr :=  \underset{n \to \infty}{\colim} \ \cobdnstr.$ In this case, note that $n$ is arbitrarily large and not part of the structure.  
We will use the notation $\cobdn$ and $\mathcal{C}_d$, respectively, when no 
$\theta$-structures are considered in the definition of the objects and the morphisms of the category. We will generally regard this as a special case of 
$\cobdstr$ since it corresponds up to homotopy equivalence to the case of the 
universal vector bundle of rank $d$.  

\medskip

Next we recall the definition of the topology on the spaces of objects. For notational simplicity, we restrict only to the case $n=\infty$. Given  a compact smooth $(d-1)$-manifold $M$ with $\eps$-collared boundary, let $\mathrm{Emb}^\eps(M, \RR_+ \times \RR^{\infty})$ denote the space $\eps$-neat 
embeddings with  the $C^\infty$-topology. Here ``$\eps$-neat embedding'' means a diffeomorphism onto a smooth neatly embedded submanifold, which is cylindrical in the collar 
coordinate inside $[0, \eps) \times \RR^{\infty}$. Then we define   $\mathrm{Emb}(M, \RR_+ \times \RR^{\infty})$ to be the colimit of these spaces as $\eps \to 0$. 
There is a canonical map
\[ \mathrm{Emb}(M, \RR_+\times \RR^{\infty})\times \Bun(\eps\oplus TM, \theta)\to \ob \cobdstr,\]
which sends $(e,l)$ to $(e(M), l\circ (\id\oplus De\inv))$. The topology on the space of objects is defined by the quotient topology with respect to the collection of these maps, 
one for each $M$, where one representative from each diffeomorphism class suffices. 

Similarly, let $\Diff_{\eps}(M)$ denote the group of diffeomorphisms which are cylindrical near the collared boundary, and let $\Diff(M) := \colim_{\eps \to 0} \Diff_{\eps}(M)$. Note 
that $\Diff(M)$ acts freely on $\mathrm{Emb}(M, \RR_+\times \RR^{\infty})\times \Bun(\eps\oplus TM, \theta)$, by precomposition in both factors. We denote by $B_\theta(M)$ the 
quotient of this action; then we have a homeomorphism
\[\ob \cobdstr \cong \coprod_{M} B_\theta(M)\]
where the coproduct ranges over compact smooth $(d-1)$-manifolds $M$ with collared boundary, one from each diffeomorphism class. 

The analogous construction for compact $d$-dimensional cobordisms, possibly with corners, which are collared, defines the topology of the space of morphisms. In this case, we have a homeomorphism
\[\mor\cobdstr \cong \coprod_{W} (0,\infty) \times B_\theta(W)\]
where $B_\theta(W)$ is the quotient under the $\Diff(W)$-action on the product
\[\Emb(W, [0,1]\times \RR_+\times \RR^\infty)\times \Bun(TW, \theta).\]

\begin{rem}
 Our definition of the cobordism category differs from \cite{Genauer(2008)} or \cite{Perlmutter} in several respects. Firstly, we use a different definition of tangential structure which does 
 not require a model for the classifying map of the tangent bundle. Secondly, we use a reduced version of the cobordism category as suggested in \cite[Remark 2.1]{GMTW(2009)}. These technical 
 modifications do not affect the homotopy type of the spaces of objects or morphisms, or of the classifying space. Finally, the category $\cobdnstr$ has no identity morphisms, and we shall
 consider it here as a \emph{non-unital category}, while in \cite{GMTW(2009)} the identity morphisms are formally added. Again, the homotopy type of the classifying space is not affected by this 
 difference (see also Section \ref{sec:non_unital_categories}).
\end{rem}

\begin{defn}
 Let $M$ and $N$ be compact smooth $(d-1)$-manifolds, possibly with boundary. A cobordism (with corners) $W = (W; \partial_v W = M \amalg N, \partial_h W)$ from $M$ to $N$ is an 
 $h$-\emph{cobordism} if all of the inclusions
 \[M\hookrightarrow W\hookleftarrow N, \quad  \dell M \hookrightarrow \dell_h W \hookleftarrow \dell N\]
are  homotopy equivalences. The category $\hcobd\subset \cobdstr$ is the subcategory which has the same objects as $\cobdstr$, and where the morphisms are 
$h$-cobordisms. We will also denote by $\hcobdno \subset \cobd$ the corresponding subcategory where no $\theta$-structures are considered. 
\end{defn}

\section{Non-unital categories and semi-simplicial spaces} \label{sec:non_unital_categories}

Let $\Delta_{<} \subseteq \Delta$ denote the subcategory of the simplex category which consists of the injective maps. A functor $X_{\bullet}: \Delta_{<}^{\mathrm{op}} \to \mathrm{Spaces}$ is a \emph{semi-simplicial space}. Similarly to simplicial spaces, $X_{\bullet}$ has a geometric realization $\|X_\bullet\|$. This is isomorphic to the geometric realization of the simplicial space which is associated to $X_\bullet$ by freely adding degeneracies (i.e., by taking the left 
Kan extension of $X_\bullet$ along the inclusion $\Delta_{<} \subseteq \Delta$.) 
The geometric realization of a semi-simplicial space admits a skeletal filtration
$$\varnothing = \|X_{\bullet}\|_{-1} \subseteq \| X_{\bullet} \|_0 \subseteq \| X_{\bullet} \|_1 \subseteq \cdots \subseteq \| X_{\bullet}\|_n \subseteq \cdots \subseteq \| X_\bullet\| = \bigcup_{n \geq 0} \|X_{\bullet}\|_n$$
which is defined inductively as follows: $\| X_{\bullet}\|_0 : = X_0$ and there are successive pushout squares 
\[
 \xymatrix{
 X_k \times \partial \Delta^{k} \ar[r] \ar[d] & \|X_{\bullet}\|_{k-1} \ar[d] \\
 X_k \times \Delta^k \ar[r] & \|X_{\bullet}\|_k
 }
\]
where the top map is defined using the simplicial operators on $X_{\bullet}$. It follows that the geometric realization of a 
degreewise weak equivalence of semi-simplicial spaces is again a weak equivalence of spaces. We also refer to \cite{ERW} for a detailed treatment of the homotopy theory of semi-simplicial spaces.

\medskip

\noindent The nerve of a non-unital topological category $\cobx$ defines a semi-simplicial space 
$$N_\bullet \cobx: \Delta_{<}^{\mathrm{op}} \to \mathrm{Spaces}, \ [n] \mapsto N_n \cobx := \underbrace{\mor \cobx \times_{\ob \cobx}  \cdots \times_{\ob \cobx} \mor \cobx}_{n}.$$
Note that $N_0 \cobx = \ob \cobx$. The geometric realization of $N_{\bullet} \cobx$ is the \emph{classifying space} of $\cobx$ and 
will be denoted by $B \cobx$. We can associate a unital topological category 
$\cobx \oplus \mathbf{1}$ to a non-unital topological category $\cobx$, simply by formally adding identities (disjointly in the topological sense). Then the classifying space of $\cobx \oplus \mathbf{1}$, as a unital topological category, is isomorphic to the classifying space of $\cobx$, as a non-unital topological category. 

\medskip

The following proposition is a variation of well-known results but formulated in the context of non-unital categories. Given a space $X$ and points $x,y \in X$, we write $\Omega_{x, y} X$ for the space of paths in $X$ starting at $x$ and ending at $y$. If $\cobx$ is a (non-unital, topological) category and $X, Y$ are objects of $\cobx$, 
then every morphism from $X$ to $Y$ defines an element in $\Omega_{X, Y} B \cobx$,
and thus we obtain a continuous map 
$$i_{X,Y}: \cobx(X, Y) \to \Omega_{X,Y} B \cobx.$$
For an object $X$ in a (non-unital, topological) category $\cobx$, there is a `transport' (non-unital, topological) category $X \wr \cobx$ associated to 
the functor which is represented by the object $X$. The objects of 
$X \wr \cobx$ are the morphisms in $\cobx$ with source $X$. This set 
of objects is topologized as a subspace of the space of all morphisms 
in $\cobx$. The morphisms are given by morphisms in $\cobx$ that 
make the obvious triangle commute. The topology is defined by a pullback square of spaces
\begin{equation} \label{transport-cat}
\xymatrix{
\mor{(X \wr \cobx)} \ar[r] \ar[d] & \ob{(X \wr \cobx)} \ar[d] \\
  \mor{(\cobx)} \ar[r] & \ob{(\cobx)}
  } 
 \end{equation}
where the horizontal maps evaluate at the source. (This data can be used as the definition of the transport category, see \cite[pp. 235--236]{GMTW(2009)}.) 

Note that the topological category $X \wr \cobx$ has an initial 
object if $\cobx$ is unital: this is given by the identity of $X$. 
In this case, it follows that its classifying space is contractible.
In general, there is a canonical functor 
$(X \wr \cobx) \oplus \mathbf{1} \to X \wr (\cobx \oplus \mathbf{1})$
which is not an equivalence of categories.
For every object $Y$ in $\cobx$, there is a continuous map 
$$j_{X, Y}: \cobx(X, Y) \to B(X \wr \cobx)$$ 
which is given by the inclusion of $0$-simplices. 

\begin{prop} \label{technical-lemma}
Let $\cobx$ be a non-unital topological category and let $X$ be an object of $\cobx$ such that the projection map 
\[\ob(X\wr \cobx) \to \ob(\cobx)\]
is a Serre fibration, and for every morphism $f: Y \to Z$, the induced map 
$$f_*: \cobx(X, Y) \to \cobx(X, Z)$$
is a weak equivalence of spaces. Then, for any object $Y$ of $\cobx$, the canonical map 
$$(i_{X, Y}, j_{X, Y}): \cobx(X, Y) \to \Omega_{X, Y} B \cobx \times B(X \wr \cobx)$$
is a weak equivalence. 
\end{prop}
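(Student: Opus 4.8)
The plan is to deduce the weak equivalence from a comparison of two fibration sequences, one built from the classifying space $B\cobx$ and one from the transport category $X \wr \cobx$, both having $\cobx(X,-)$-type fibers. First I would set up the standard "group-completion-free" tool: for a non-unital category $\cobx$ and a fixed object $X$, consider the semi-simplicial space $N_\bullet \cobx$ together with the map to the constant semi-simplicial space on $\ob\cobx$ given in each degree by evaluation at the initial vertex, $N_n\cobx \to \ob\cobx$. Taking the fiber over $X \in \ob\cobx$ levelwise produces a semi-simplicial space $E_\bullet$ with $E_n = \{$chains of $n$ composable morphisms starting at $X\}$; its realization is exactly $B(X\wr \cobx)$ up to the usual reindexing (the objects of $X\wr\cobx$ are morphisms out of $X$, i.e. $E_0$, and $N_n(X\wr\cobx) \cong E_{n+1}$ under "prepend $X$"). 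Parallel to this, restricting attention to chains that also end at $Y$ gives the path space description: one wants the homotopy fiber of $\mathrm{ev}\colon N_\bullet\cobx \to \ob\cobx$ (over $X$) to compute $\Omega_{X,-}B\cobx$, and fixing the other end gives $\Omega_{X,Y}B\cobx$.

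The key technical step is that the evaluation map $\|N_\bullet\cobx\| \to \ob\cobx$ — or rather the map whose levelwise pieces $N_n\cobx \to \ob\cobx$ evaluate at the $0$th vertex — is a quasifibration, with fiber over $X$ the space $B(X\wr\cobx)$. This is where the two hypotheses enter. The hypothesis that $\ob(X\wr\cobx)\to\ob\cobx$ is a Serre fibration handles the bottom simplicial level $N_0\cobx = \ob\cobx$ (and, via the pullback square \eqref{transport-cat} defining the topology, controls the source maps from $N_n$ in a compatible way). The hypothesis that $f_*\colon \cobx(X,Y)\to\cobx(X,Z)$ is a weak equivalence for every morphism $f\colon Y\to Z$ is precisely what is needed to run the standard Segal/May gluing-of-quasifibrations argument over the skeletal filtration of $\|N_\bullet\cobx\|$ recalled in Section~\ref{sec:non_unital_categories}: at the pushout attaching $N_k\cobx\times\Delta^k$ along $N_k\cobx\times\partial\Delta^k$, the relevant structure maps between fibers are, up to homotopy, composites of maps of the form $f_*$, hence weak equivalences, so the Dold--Thom criterion applies and the filtration stages assemble to a quasifibration. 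I would isolate this as an internal lemma, essentially a non-unital restatement of \cite[Prop.~2.4 or the appendix]{GMTW(2009)} / \cite{ERW}, and cite those for the gluing lemma rather than reprove it.

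Granting the quasifibration, the proof concludes formally. Pulling back along a point $X\to \ob\cobx$ and then looking at the sub-semi-simplicial space of chains ending at $Y$ exhibits a map of quasifibrations whose total spaces are $\|N_\bullet\cobx\|$-based path spaces and whose fibers are $B(X\wr\cobx)$ and its "ending at $Y$" analogue; taking the long exact sequences and comparing shows that $\cobx(X,Y)$ sits in a homotopy fiber sequence $\cobx(X,Y)\to B(X\wr\cobx)\to \text{(contractible piece)}$ after the other end is also fixed — more cleanly, the map $(i_{X,Y},j_{X,Y})$ is identified with the comparison of the homotopy fiber of $B(X\wr\cobx)\to B\cobx$ over $Y$ with $\cobx(X,Y)$, and both hypotheses force this comparison to be a weak equivalence. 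I expect the main obstacle to be the bookkeeping needed to verify that the structure maps appearing in the skeletal gluing really are (homotopic to) iterated $f_*$'s in the non-unital setting — the absence of identities means one cannot use the usual "extra degeneracy" contraction of $X\wr\cobx$, which is exactly why the hypotheses of the proposition are imposed by hand; so the crux is checking the Dold--Thom hypothesis levelwise and confirming that it is implied by "$f_*$ is a weak equivalence for all $f$."
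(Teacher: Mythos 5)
Your overall strategy — skeletal filtration, Dold--Thom gluing of quasifibrations, and locating the hypothesis ``$f_*$ is a weak equivalence'' as the input to the fiber-comparison step — is the right one and matches the paper's. But the initial semi-simplicial setup you use is not well-defined. The map $N_n\cobx \to \ob\cobx$, ``evaluate at the $0$th vertex,'' is \emph{not} a map of semi-simplicial spaces to the constant object $\ob\cobx$: the square for $d_0$ fails, since $d_0$ of a chain $X_0 \to X_1 \to \cdots \to X_n$ has initial vertex $X_1$, not $X_0$. Consequently the levelwise ``fiber over $X$'' you call $E_\bullet$ (chains starting at $X$) is not a sub-semi-simplicial space of $N_\bullet\cobx$: $d_0$ does not restrict to it, and the would-be degree-$0$ piece $\{X\}$ has only one face map $E_1 \to E_0$ rather than two. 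So the claim that $\|E_\bullet\| \simeq B(X\wr\cobx)$ and that this gives a quasifibration $\|N_\bullet\cobx\| \to \ob\cobx$ does not get off the ground. (In the unital world the extra degeneracy patches this, as you note; in the non-unital world it simply breaks.)

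The fix is exactly the thing you gesture at near the end: work instead with the honest semi-simplicial map $N_\bullet(X\wr\cobx) \to N_\bullet\cobx$ induced by the projection functor $p\colon X\wr\cobx \to \cobx$, which \emph{is} compatible with all face maps. In degree $k$ its fibers are $\cobx(X,Y_0)$ (over a chain $Y_0 \to \cdots \to Y_k$), it is a Serre fibration in each degree by the pullback description of $N_k(X\wr\cobx)$, and the attaching maps $d_i$ act on fibers by the identity (for $i\geq 1$) or by post-composition with $Y_0\to Y_1$ (for $i=0$), which is where the hypothesis enters. Running your skeletal Dold--Thom argument on \emph{this} map then shows that the square with vertices $\ob(X\wr\cobx)$, $B(X\wr\cobx)$, $\ob\cobx$, $B\cobx$ is a homotopy pullback. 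One further ingredient you should make explicit is that $B p\colon B(X\wr\cobx) \to B\cobx$ is homotopic to the constant map at $X$ (via a natural transformation from the constant functor at $X$ to $p\oplus\mathbf 1$); it is this nullhomotopy that turns the homotopy fiber of $Bp$ over $Y$ into the product $\Omega_{X,Y}B\cobx \times B(X\wr\cobx)$ appearing in the statement, rather than merely some path-space fiber. With those two corrections your argument coincides with the paper's.
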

\begin{proof}
There is a projection functor $p: X \wr \cobx \to \cobx$, $(X \to Y) \mapsto Y$. The induced map between classifying spaces is homotopic to the constant map at $X$. To see this, it suffices to note that the associated functor 
$$p \oplus \mathbf{1} \colon (X \wr \cobx) \oplus \mathbf{1} \to \cobx \oplus \mathbf{1}$$
admits a natural tranformation from the constant functor at $X$. Thus, the homotopy fiber of $B(X \wr \cobx) \to B\cobx$ at $Y \in \ob(\cobx)$ is canonically identified with $\Omega_{X, Y}(B\cobx) \times B(X \wr \cobx)$. 
Under this identification, the map $(i_{X, Y}, j_{X, Y})$ is then identified using the following square,

\[
 \xymatrix{
   \ob{(X \wr \cobx)} \ar[r] \ar[d] & B(X \wr \cobx) \ar[d] \\
  \ob{(\cobx)} \ar[r] & B(\cobx),
  }
 \]
as the inclusion of the actual fiber of the left map into the homotopy fiber of the right map. Since the left map is a Serre fibration, the assertion will follow if we show that the 
square above is a homotopy pullback.

By applying \cite[Proposition 1.6]{Segal} (see also \cite[Theorem 2.12]{ERW}), it suffices to show that for each face map $\delta_i \colon [k] \to [k+1]$, the square 
\begin{equation} \label{pullback_sq}
\xymatrix{
N_{k+1}(X \wr \cobx) \ar[d] \ar[r]^{d_i} & N_{k}(X \wr \cobx) \ar[d] \\
N_{k+1}(\cobx) \ar[r]_{d_i} & N_{k}(\cobx)
}
\end{equation}
is a homotopy pullback.  Note that the vertical projection $N_{k}(X \wr \cobx) \to N_{k}(\cobx)$ is a Serre fibration since it is obtained as a pullback of 
$\ob(X \wr \cobx) \to \ob(\cobx)$. The square is a pullback square if $i > 0$, therefore also a homotopy pullback. For $i=0$, the induced 
map between vertical (homotopy) fibers at $(Y_0 \xrightarrow{f_1} Y_1 \to \cdots \to Y_{k+1}) \in N_{k+1}(\cobx)$ is given by
$$f_1{}_* \colon \cobx(X, Y_0) \to \cobx(X, Y_1)$$
which is a weak equivalence by assumption. This shows that \eqref{pullback_sq} is a homotopy pullback for every $k \geq 0$ and $0 \leq i \leq k+1$ and
concludes the proof.   
\end{proof}

\begin{rem}
Proposition \ref{technical-lemma} holds more generally for classes of weak equivalences between spaces that are closed under homotopy colimits. 
For example, if the maps $f_*$ are homology equivalences, then the resulting map $(i_{X, Y}, j_{X, Y})$ is also a homology equivalence.
The proof is essentially the same. See also \cite[Appendix A]{Raptis}.
\end{rem}

\section{Basic properties of the $h$-cobordism category} \label{sec:invertibility_hcob}

In this section, we prove some basic results about the homotopy type of the classifying space of the $h$-cobordism category $\hcobd \subset \cobdstr$. The main result (Proposition 
\ref{hcod-is-a-grpd}) shows that the homotopy type of each component reduces to the homotopy type of a space of endomorphisms. This is a consequence of the invertibility of 
$h$-cobordisms by the classical $h$-cobordism theorem, combined with the results of Section \ref{sec:non_unital_categories}.

\medskip

We will need the following preliminary lemmas.

\begin{lem} \label{etale-category}
The source-target map 
\[(s,t): \mor \cobdstr \to \ob \cobdstr \times \ob \cobdstr\]
is a Serre fibration. As a consequence, the same is true for $\hcobd$. 
\end{lem}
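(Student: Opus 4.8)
The plan is to verify the Serre fibration property directly from the explicit description of the topology on the spaces of objects and morphisms given in Section~\ref{sec:h_cob_cat}, namely the homeomorphisms
\[
\mor \cobdstr \cong \coprod_{W} (0,\infty) \times B_\theta(W), \qquad
\ob \cobdstr \cong \coprod_{M} B_\theta(M),
\]
where $B_\theta(W)$ (resp.\ $B_\theta(M)$) is the Borel construction of the $\Diff(W)$-action (resp.\ $\Diff(M)$-action) on a product of an embedding space with a bundle space. Fixing a diffeomorphism type $W$ with vertical boundary pieces $\dell_0 W \cong M_0$, $\dell_1 W \cong M_1$, the source--target map restricted to the corresponding component is, up to the factor $(0,\infty)$ which plays no role, induced by the restriction-to-the-boundary map. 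So the first step is to reduce, component by component, to showing that the map of Borel constructions
\[
B_\theta(W) \longrightarrow B_\theta(M_0) \times B_\theta(M_1)
\]
is a Serre fibration.

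The second step handles this map of homotopy quotients. The restriction maps $\Diff(W) \to \Diff(M_0) \times \Diff(M_1)$ (restricting a diffeomorphism fixing the collars to its boundary behaviour) and the corresponding restriction on embedding spaces and on bundle spaces are all compatible, so the map above is the map of Borel constructions induced by an equivariant map of spaces over a homomorphism of topological groups. The key geometric input is that restriction to a collared neat boundary admits local sections: using the $\eps$-collar structure built into the definitions of $\Emb^\eps$ and $\Diff_\eps$, a neat embedding (resp.\ diffeomorphism, resp.\ $\theta$-structure) near $\dell_0 W \sqcup \dell_1 W$ is determined by, and can be extended from, its boundary data together with a choice of ``filling'' in the interior, and such fillings can be chosen continuously in families because embedding spaces and bundle spaces of manifolds form fibrations under restriction to a collared submanifold (the relevant fact is that $\Emb$ and $\Bun$ have the homotopy lifting property with respect to restriction along a codimension-zero collared inclusion; this is a standard parametrised isotopy/collar-extension argument). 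Granting this, one checks that the induced map on Borel constructions is a Serre fibration: given a lifting problem over a disc, first solve it on the embedding--bundle spaces using the local sections / HLP, then descend to the quotient, using that the $\Diff$-actions are free with slices (so the quotient maps are fibre bundles, in particular Serre fibrations) and that a map which is a Serre fibration upstairs and whose source and target are principal bundles over a common base induces a Serre fibration downstairs.

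The third and final step is the ``as a consequence'' clause: since $\hcobd \subseteq \cobdstr$ is the subcategory with the same objects and with morphism space the subspace of $h$-cobordisms, the source--target map for $\hcobd$ is the restriction of $(s,t)$ for $\cobdstr$ to the open-and-closed union of those components $(0,\infty) \times B_\theta(W)$ for which $W$ is an $h$-cobordism (being an $h$-cobordism depends only on the diffeomorphism type of $W$ together with the decomposition of its boundary, hence is a property of the component). A restriction of a Serre fibration to a union of components of its total space, landing in the same base, is again a Serre fibration, so this is immediate.

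The main obstacle is the second step: making precise and citing (or proving) the statement that restriction to a collared neat codimension-zero boundary is a Serre fibration on the relevant embedding and bundle spaces, and then transferring this through the free $\Diff$-actions to the Borel constructions. The embedding-space part is essentially the parametrised isotopy extension theorem adapted to manifolds with corners and neat embeddings; the bundle-space part is a standard fact about spaces of bundle maps. The transfer to quotients is routine once one knows the action admits local slices, but it requires care because $\Diff(W)$ is larger than $\Diff(M_0) \times \Diff(M_1)$ so the quotient map is not simply a pullback and one must genuinely use freeness and the existence of slices on both levels.
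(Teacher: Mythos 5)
Your proposal follows the same strategy as the paper's proof: reduce, component by component, to showing that the restriction-to-vertical-boundary map $B_\theta(W) \to B_\theta(M_0) \times B_\theta(M_1)$ of Borel constructions is a Serre fibration, then establish this by combining (a) the Serre-fibration property of the restriction map on $\Emb(W,\cdot)\times\Bun(TW,\theta)$ (parametrized isotopy extension / Lima for the embedding factor, restriction along a cofibration for the bundle factor) with (b) the Serre-fibration property of the quotient maps by the diffeomorphism groups. The paper cites Lima for (a) and Perlmutter's appendix for (b), and then closes the argument with exactly the kind of diagram chase you sketch.

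One point where your step two is stated imprecisely: the claim ``a map which is a Serre fibration upstairs and whose source and target are principal bundles over a common base induces a Serre fibration downstairs'' is not the right formulation --- the two principal bundles here lie over \emph{different} bases, namely the very source and target of the map you are trying to prove is a fibration, so there is no common base in sight. The correct lemma is a four-term lifting argument: in a commutative square in which both horizontal (quotient) maps are surjective Serre fibrations and the left vertical (restriction on embedding/bundle data) is a Serre fibration, the right vertical is a Serre fibration. Concretely, given a lifting problem for the right vertical over $D^n \times I$, lift the initial $D^n \times \{0\}$ map through the top quotient, lift the $D^n \times I$ map through the bottom quotient compatibly, solve the resulting lifting problem upstairs using the left vertical fibration, and push the solution back down; commutativity of the square guarantees the image lands where it should. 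Also note that your phrase ``codimension-zero collared inclusion'' should read codimension one: $M_0 \sqcup M_1$ is the vertical boundary of $W$. These are phrasing issues rather than gaps; your overall plan matches the paper's.
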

\begin{proof}
It suffices to show that for each cobordism $(W; M_0, M_1)$, the map that restricts to the (vertical) boundary
\begin{equation} \label{Serre-fibration}
B_{\theta}(W) \longrightarrow B_{\theta}(M_0) \times B_\theta(M_1)
\end{equation}
is a Serre fibration. Consider the commutative square
$$
\xymatrix{
\Emb(W, [0,1] \times \corn\infty1) \times \Bun(TW,\theta) \ar[r] \ar[d] & B_{\theta}(W) \ar[d]\\
\Emb(M_0 \sqcup M_1, \{0, 1\} \times \corn \infty 1)\times \Bun(\eps \oplus (TM_0\sqcup TM_1), \theta) \ar[r] & B_{\theta}(M_0) \times B_{\theta}(M_1).
}
$$
The left vertical restriction map is a Serre fibration in each factor. This can be shown for the first factor following the methods of \cite{Lima}; for the second factor this is true because we restrict along a cofibration. Secondly, the horizontal surjective 
quotient maps are also Serre fibrations. This can be obtained as a special case of the main result in \cite[Appendix A]{Perlmutter}. Then
it follows that \eqref{Serre-fibration} is a Serre fibration.
\end{proof}

\begin{lem} \label{thin-identities}
Let $M = (M, l_M)$ be an object of $\cobdstr$. Then $B(M\wr\cobdstr)$ and $B(M \wr \hcobd)$ are weakly contractible.
\end{lem}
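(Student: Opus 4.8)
The plan is to exhibit, for each object $M$, a contracting homotopy for $B(M \wr \cobx)$ in the style of the usual argument that shows $X \wr \cobx$ has an initial object when $\cobx$ is unital. The difficulty is precisely that our categories are non-unital, so there is no identity morphism on $M$ to serve as the initial object. The key geometric input — available here but not in a general non-unital category — is that any object of $\cobx := \cobdstr$ (or of $\hcobd$) admits a ``thin'' cylinder-like $h$-cobordism: for $M = (M, l_M)$ and small $a > 0$, the cobordism $([0,a] \times M$, with the product $\theta$-structure obtained from $l_M$ via the canonical splitting of $T([0,a]\times M)|_{\{0\}\times M}$ and the identity on the $M$-direction$)$ is an object of $M \wr \cobx$ (and of $M \wr \hcobd$, since a product cobordism is an $h$-cobordism). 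As $a \to 0$ these thin cobordisms approximate an identity, and composition with them is the mechanism that will retract everything onto a point.

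First I would make precise a ``concatenation with a thin cylinder'' operation. Given an object $(a, W, l_W)$ of $M \wr \cobx$ — i.e. a morphism with source $M$ — and a parameter $s \in [0,1]$, form $(s\eps + a, U_{s\eps} \cup \mathrm{sh}_{s\eps}(W), \dots)$, where $U_t = [0,t]\times M$ is the thin cylinder of length $t$ (with $U_0$ degenerating; one handles $s=0$ by just taking $W$ itself, reparametrized, which is continuous by the collar conditions in the definition of neatly embedded submanifolds). This gives a continuous path in $\ob(M \wr \cobx)$ from $(a,W)$ at $s=0$ to $U_\eps \circ (a,W)$ at $s=1$; naturality in $W$ (using that composition in $\cobx$ is strictly associative) upgrades it to a natural transformation on $M \wr \cobx$ from the identity functor to the functor ``precompose with $U_\eps$''. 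On classifying spaces this yields a homotopy $B(M\wr\cobx) \times [0,1] \to B(M\wr\cobx)$ from the identity to the map induced by $U_\eps \circ (-)$.

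Next I would contract the image. The functor $U_\eps \circ (-)\colon M \wr \cobx \to M \wr \cobx$ factors (up to the evident reparametrization) through the full subcategory on objects of the form $U_\eps \circ (a,W)$, and on that subcategory the thin cylinder $U_\eps$ itself — viewed as the object $U_\eps \circ (\text{``}U_0\text{''})$, i.e. the length-$\eps$ cylinder with its canonical $\theta$-structure — receives a canonical morphism to every object $U_\eps \circ (a,W)$, namely $(a, W, l_W)$ itself regarded as a morphism $U_\eps \to U_\eps\circ(a,W)$ in $M\wr\cobx$ (the required triangle over $M$ commutes by construction). Thus $U_\eps$ is an initial object of this subcategory, so its classifying space is contractible, and composing the two homotopies shows $B(M\wr\cobx)$ is weakly contractible. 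The same argument applies verbatim to $\hcobd$ because every cobordism built above is an $h$-cobordism: thin cylinders are, composites of $h$-cobordisms are, and the morphism $U_\eps \to U_\eps\circ W$ is $W$ which is an $h$-cobordism by hypothesis.

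The main obstacle I anticipate is purely point-set/technical: checking that the concatenation-with-$U_t$ map is genuinely continuous as $t\to 0$ and respects the quotient topology on $\ob \cobx \cong \coprod_M (0,\infty)\times B_\theta(M)$ described in Section~\ref{sec:h_cob_cat} — in particular that the family of embeddings and $\theta$-structures varies continuously through the degeneration, and that the collar/$\eps$-neatness conditions are preserved so that $U_t \cup \mathrm{sh}_t(W)$ is again a neatly embedded submanifold with a well-defined glued $\theta$-structure. This is the standard ``gluing cobordisms is continuous'' verification; I would either cite the analogous point in \cite{GMTW(2009), Genauer(2008)} or spell out the collar bookkeeping. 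Everything else is formal: initial objects give contractible classifying spaces, and a natural transformation of functors gives a homotopy of induced maps on $B(-)$, both of which hold for non-unital categories after passing to $(-)\oplus\mathbf 1$ exactly as in Section~\ref{sec:non_unital_categories}.
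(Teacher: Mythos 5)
Your overall strategy --- absorb a thin cylinder $U_\eps$ at the $M$-end and use the cone whose component at an object $W$ is $W$ itself --- is essentially the paper's, but two of your steps do not work as stated. First, there is no natural transformation from the identity functor of $M\wr\cobx$ to $(-)\circ U_\eps$: by the definition of the transport category, its component at an object $W$ would have to be a morphism $V$ of $\cobx$ attached at the \emph{target} end with $V\circ W = W\circ U_\eps$, and in general no such $V$ exists (the two sides differ as subsets of $[0,a+\eps]\times\corn{\infty}{1}$); in particular the path of objects you construct is not a natural transformation. What you actually have is a continuous one-parameter family of functors $F_s=(-)\circ U_{s\eps}$ with $F_0=\mathrm{id}$, and that does induce the desired homotopy on classifying spaces once continuity at $s=0$ (collar stretching, in the quotient topology) is checked; so this step is repairable, but the justification must be the homotopy through functors, not naturality. (The paper argues differently here: it shows $B((-)\circ U_\eps)$ is a weak equivalence, levelwise on the nerve, using Lemma \ref{etale-category} together with the fact that gluing a cylinder is a weak equivalence on each morphism space.)

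Second, the claim that $U_\eps$ is an initial object of the full subcategory on the objects $W\circ U_\eps$ is false as written: the category is non-unital, so there is no morphism $U_\eps\to U_\eps$ at all. After adjoining units one can make $U_\eps$ initial (uniqueness of $U_\eps\to W\circ U_\eps$ does hold, since $V\circ U_\eps=W\circ U_\eps$ forces $V=W$ by comparing heights --- a point you assert existence for but never uniqueness), but even then, contracting the classifying space of a \emph{topological} category requires the component assignment $W\circ U_\eps\mapsto W$ to be \emph{continuous} on the object space of the subcategory; this is an ``un-gluing'' map defined on a subspace of the quotient topology on $\mor\cobx$, and a subspace of a quotient need not carry the quotient topology, so this is a genuine point-set issue you neither prove nor flag. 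The detour through the image subcategory is also unnecessary: define the cone on all of $M\wr\cobx$, from the constant functor at $U_\eps$ to $\bigl((-)\circ U_\eps\bigr)\oplus\mathbf 1$, with component at $W$ equal to $W$ itself; naturality is the associativity identity $V\circ(W\circ U_\eps)=(V\circ W)\circ U_\eps$, and continuity is immediate because the component map is essentially the inclusion $\ob(M\wr\cobx)\subset\mor\cobx$. This is exactly how the paper shows $B((-)\circ U_\eps)$ is homotopically constant; combined with your first homotopy (or with the paper's weak-equivalence statement) it yields weak contractibility, and the argument applies verbatim to $\hcobd$.
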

\begin{proof}
Consider $M\times [0,1]$ as a $\theta$-manifold with the cylindrical $\theta$-structure induced from $l_M$. For any object $N=(N,l_N)$, the composition map
\[\cobdstr(M, N)\to \cobdstr(M, N), \quad W\mapsto W\circ (M\times [0,1])\]
is a weak equivalence. It follows, using Lemma \ref{etale-category}, that the functor 
\[F\colon M \wr \cobdstr \to M \wr \cobdstr\]
induced by precomposition with $M\times [0,1]$ is a weak equivalence on each level of the nerve, and hence on the geometric realization. 
On the other hand, the map $B(F)$ is homotopically constant because $F\oplus \mathbf 1$ receives a natural transformation from the constant functor 
at $M\times [0, 1]$. This shows that $B(M\wr\cobdstr)$ is contractible. The same argument applies to $B(M\wr\hcobd)$. 
\end{proof}

The following proposition establishes one of the main properties of $\hcobd$ as a consequence of the $h$-cobordism theorem. To state it, we denote by $\pi_0 \hcobd$ 
the ordinary category which has the same objects as $\hcobd$ (with the discrete topology), and morphisms 
\[(\pi_0 \hcobd)(M, N):= \pi_0 (\hcobd(M, N)).\]
Note that this turns out to be a unital category, where $M\times [0,1]$ (with the cylindrical $\theta$-structure) is the identity morphism of $M$.

\begin{defn}
A $\theta$-structured $h$-cobordism $W\in \mor \hcobd$ is called \emph{invertible} if it defines an isomorphism in $\pi_0 \hcobd$.
\end{defn}

\begin{prop} \label{hcod-is-a-grpd}
For $d\geq 7$, every morphism in $\hcobd$ is invertible.  
\end{prop}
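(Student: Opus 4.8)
The key input is the classical $h$-cobordism theorem, which I would apply not to $W$ itself but to an auxiliary manifold obtained from $W$. The claim is that each morphism $W$ in $\hcobd(M,N)$ admits a composition inverse up to $\pi_0$: an $h$-cobordism $\overline{W} \in \hcobd(N,M)$ with $[\overline{W} \circ W] = [M \times [0,1]]$ and $[W \circ \overline{W}] = [N \times [0,1]]$ in $\pi_0 \hcobd$. The natural candidate for $\overline W$ is the "reversed" cobordism, $W$ with the roles of $\dell_0 W$ and $\dell_1 W$ interchanged (and the induced $\theta$-structure), so the geometric content lies entirely in identifying $\overline W \circ W$ and $W \circ \overline W$ with the respective cylinders, up to a path in the morphism space.

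\textbf{Key steps.} First I would reduce to a statement about a manifold without horizontal boundary by a doubling or collar argument: the pair $(W; M, N)$ together with its horizontal boundary $\dell_h W$ is an $h$-cobordism of manifolds-with-boundary, and I would like to invoke the relative/manifold-with-corners version of the $s$-cobordism theorem. Here the hypothesis $d \geq 7$ ensures that $W$ has dimension $\geq 7$ and its vertical boundary pieces $M, N$ have dimension $\geq 6$, so that both the interior Whitney trick and the boundary Whitney trick are available; this is exactly the dimension range in which the $h$-cobordism theorem for manifolds with boundary holds. Second, since we are only asking for invertibility in $\pi_0 \hcobd$ — i.e.\ an isomorphism in a category where $M \times [0,1]$ is already the identity — the Whitehead torsion obstruction is irrelevant: we do not need $W$ to be an $s$-cobordism, only an $h$-cobordism, because the composite $\overline W \circ W$ need only be \emph{diffeomorphic} to $M \times [0,1]$ for its class to be the identity, and the $h$-cobordism theorem (in the simply-connected case) or more precisely the observation that $\overline W \circ W$ is always diffeomorphic to the cylinder (regardless of torsion, by an Eilenberg swindle / infinite-repetition argument, or directly because $W \cup_N \overline W$ retracts) gives this. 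Third, I would make precise the identification $\overline W \circ W \cong M \times [0,1]$ as morphisms in $\hcobd$: concatenating $W$ and its reverse produces a cobordism from $M$ to $M$ which deformation retracts onto $M$, and one checks it is an $h$-cobordism with trivial torsion in the relevant Whitehead group (the torsion of $\overline W \circ W$ is $\tau(W) + (-1)^? \tau(W)$ under the duality for reversed cobordisms, hence cancels), so by the $s$-cobordism theorem it is diffeomorphic to $M \times [0,1]$ rel one end; I then need the diffeomorphism to be realized through a path of neatly embedded cobordisms with $\theta$-structure, which is where Lemma~\ref{etale-category} and the contractibility of the relevant bundle spaces enter.

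\textbf{Main obstacle.} The genuinely delicate point is the presence of the horizontal boundary $\dell_h W$, which is itself an arbitrary $h$-cobordism between $\dell M$ and $\dell N$ rather than a product. So the $h$-cobordism theorem must be applied to a manifold with corners, and one must simultaneously trivialize $W$ rel $\dell_0 W$ \emph{and} trivialize $\dell_h W$ rel $\dell(\dell_0 W)$ in a compatible way. I expect the cleanest route is: first straighten the corner and apply the boundary $s$-cobordism theorem to $\dell_h W$ (using $\dim \dell_h W = d-1 \geq 6$) to see that $\overline{\dell_h W} \circ \dell_h W \cong \dell M \times [0,1]$, then extend this trivialization inward using a collar and the relative ($\dell$-fixed) version of the $h$-cobordism theorem for $\overline W \circ W$. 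Keeping track of the $\theta$-structures throughout is routine once the diffeomorphisms are in hand, since the space of $\theta$-structures extending a given boundary condition is a torsor over a space of bundle maps which is highly connected; the topology of the morphism spaces is handled via Lemma~\ref{etale-category}. The upshot is that the only essential use of $d \geq 7$ is to guarantee the validity of the $h$-cobordism theorem for $W$ and for $\dell_h W$ simultaneously.
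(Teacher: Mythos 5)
There is a genuine gap at the core of your argument: you take the inverse of $W$ to be the \emph{reversed} cobordism $\overline W$ and assert that $\overline W \circ W$ is diffeomorphic rel ends to the cylinder ``regardless of torsion'', the torsion being ``$\tau(W)+(-1)^{?}\tau(W)$, hence cancels''. This is false. By the sum formula and Milnor's duality theorem, the torsion of the double $W\cup_N\overline W$ measured from $M$ is $\tau(W,M)+(-1)^{d-1}\,\overline{\tau(W,M)}$, where the bar is the conjugation involution on $\Wh(\pi_1 M)$ --- it is \emph{not} $\tau-\tau$. This class is nonzero in general: for $\pi_1 M\cong \ZZ/5$ the involution acts trivially on $\Wh(\ZZ/5)\cong\ZZ$, so for $d$ odd the double of an $h$-cobordism with $\tau\neq 0$ has torsion $2\tau\neq 0$ and is not a product rel $M$; no Eilenberg swindle helps (the swindle proves uniqueness of inverses and that a right inverse is a left inverse, not that the reversal is an inverse), and ``$W\cup_N\overline W$ retracts onto $M$'' only says it is an $h$-cobordism, which is the whole point of torsion. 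Since a path in $\hcobd(M,M)$ from the composite to the cylinder would force the torsion to vanish, your candidate inverse is simply not an inverse. The correct move, and what the paper does, is not to reverse $W$ but to invoke the standard existence statement coming from the $s$-cobordism theorem: given $(W;M,N)$ one realizes on $N$ an $h$-cobordism $U$ with the complementary torsion so that $W\circ U$ is diffeomorphic rel ends to a cylinder; note also that producing a \emph{right} inverse for every morphism already suffices for invertibility in $\pi_0\hcobd$.

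Your treatment of the horizontal boundary is in the right spirit (handle $\dell_h W$ first, where $\dim \dell_h W=d-1\geq 6$ is the real source of the hypothesis $d\geq 7$, then reduce to the $\dell$-trivial case), but as stated it again leans on the reversed-cobordism claim, now for $\dell_h W$. The paper's reduction is different and worth noting: it takes the cylinder $Z=[0,1]\times M$, embeds a chosen inverse $(\dell_h W)^{-1}$ into $[0,1]\times\dell M$, and re-declares the boundary decomposition of $Z$ so that this copy becomes the horizontal boundary of a cobordism $U'$ ending at $M$; then $\dell_h(W\circ U')$ is a product and the $\dell$-trivial case ($d\geq 6$) produces the inverse. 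Finally, the $\theta$-structure step is not merely ``a torsor over a highly connected space of bundle maps'': an extension of $l_M$ over the inverse $U$ exists because $M\hookrightarrow U$ is a homotopy equivalence, but the structure it induces on the free end need not agree with $l_N$, and one must correct it (the paper splices in, near that end, the reverse of the bundle homotopy furnished by the composite $W\circ U\cong N\times[0,1]$) before concluding that the composite equals the cylinder in $\pi_0\hcobd$.
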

\begin{proof}
A morphism in $\hcobd$ is represented by a compact smooth neatly embedded $d$-manifold with corners, depicted as follows,
\[
\xymatrix{
 \dell(\dell_h W) \ar[r] \ar[d] & \dell_h W \ar[d]\\
 M\amalg N \ar[r] & W  
}
\]
together with a bundle map $l_W\colon TW\to \theta$ inducing $\theta$-structures $l_M$ on the source $\dell_0 W=M$ and $l_N$ on the target $\dell_1 W=N$.

It is enough to show that any such morphism admits a right inverse in $\pi_0\hcobd$. We first construct an (abstract) $h$-cobordism $U$ with domain $N$ and target $M$, such  that $W\circ U$ is diffeomorphic, relative to both  ends, to $N\times [0,1]$.
In the case where $\dell_h W$ is empty or diffeomorphic to a product cobordism, and $d\geq 6$, the existence of such $U$ is a well-known consequence of the $h$-cobordism theorem.

To construct $U$ in the general case, we first construct a cobordism $U'$ with target $M$ such that the horizontal boundary $\dell_h(W\circ U')$ is diffeomorphic to a product cobordism. To do this, we choose a right inverse $(\dell_h W)\inv$ of $\dell_h W$. Consider the manifold with corners
$$Z:= [0,1] \times M$$ 
and  choose an embedding of $(\dell_h W)\inv$ into $(0,1] \times \dell M$. After rearranging the boundary pieces of $Z$, we obtain a cobordism $U'$ which ends at $\{1\} \times M$ 
and starts from 
$$\dell_0 U' = \{0\} \times M \cup \overline{([0,1] \times \dell M - (\dell_h W)\inv)},$$
with horizontal boundary $\dell_h U' = (\dell_h W)\inv$. This is again an 
$h$-cobordism. Since  $\dell_h(W \circ U')$ is diffeomorphic to a product cobordism, $W\circ U'$ has a right inverse $U''$. Then, $U:=U'\circ U''$ and $W \circ U$ will be
diffeomorphic, relative to both ends, to $N \times [0,1]$. This completes the  construction of $U$. 

Next choose a collar for $U$ and a neat embedding of $U$ into $[0,1]\times \corn{\infty}1$; this is possible because the space of neat embeddings is contractible \cite[Theorem 2.7]{Genauer(2008)}. 
For the same reason, the induced embeddings of $M\cong \dell_1 U$ and $N\cong \dell_0 U$ into $\corn{\infty}1$ will be isotopic to the inclusion maps. We have already used in the proof of Lemma \ref{etale-category} that the restriction map
\[\Emb(U, [0,1]\times \corn{\infty}1)\to \Emb(N\sqcup M, \{0,1\}\times \corn{\infty}1)\]
is  a Serre fibration; hence after possibly changing the neat embedding of $U$ by an isotopy, we can assume that $\dell_1 U=M$ and $\dell_0 U = N$ as subsets of $\corn{\infty}1$. 

Next we construct a suitable $\theta$-structure on $U$. By the $h$-cobordism condition, any bundle map $\eps\oplus TM\to \theta$ extends to a bundle map $l'_U$ on $TU$, inducing  
a $\theta$-structure $l'_N$ on $N=\dell_0 U$. The composite $\theta$-structure on $W\circ U\cong N\times [0,1]$ is then a homotopy of bundle maps between $l_N$ and $l'_N$. 
We insert a backwards copy of this bundle homotopy in a cylinder near $N\times \{0\}$. This defines a new $\theta$-structure $l_U$ on $U$ which induces $l_N$ on $N=\dell_0 U$. Composing 
$l_U$ with $l_W$ defines a $\theta$-structure on $W \circ U \cong N \times [0,1]$ which is homotopic, relative to both ends, to the cylindrical $\theta$-structure. 

Then the morphism $U=(1, U, l_U)$ is a morphism from $(N, l_N)$ to $(M, l_M)$, and $W \circ U=N\times [0,1]$ in $\pi_0 \hcobd(N,N)$.
\end{proof}

We can now prove our main result in this section. 

\begin{prop} \label{h-cob-cat} 
Let $d \geq 7$ and let $M$ and $N$ be objects of $\hcobd$. Then the canonical map
$$i_{M,N}: \hcobd(M, N) \to \Omega_{M, N} B \hcobd$$
is a weak equivalence.
\end{prop}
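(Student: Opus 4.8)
The plan is to apply Proposition \ref{technical-lemma} with $\cobx = \hcobd$ and $X = M$, and then to combine it with Lemma \ref{thin-identities}. Concretely, Proposition \ref{technical-lemma} provides, for every object $N$, a weak equivalence
\[
(i_{M,N}, j_{M,N})\colon \hcobd(M,N) \xrightarrow{\ \sim\ } \Omega_{M,N} B\hcobd \times B(M \wr \hcobd),
\]
provided that the two hypotheses of that proposition are verified: first, that the source map $\ob(M \wr \hcobd) \to \ob(\hcobd)$ is a Serre fibration, and second, that for every morphism $f\colon Y \to Z$ in $\hcobd$ the induced post-composition map $f_*\colon \hcobd(M,Y) \to \hcobd(M,Z)$ is a weak equivalence. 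Granting this, Lemma \ref{thin-identities} tells us that $B(M \wr \hcobd)$ is weakly contractible, so the projection to the first factor $\Omega_{M,N} B\hcobd$ is itself a weak equivalence, and this projection composed with $(i_{M,N}, j_{M,N})$ is exactly $i_{M,N}$. Hence $i_{M,N}$ is a weak equivalence, which is the claim.

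So the real work is checking the two hypotheses of Proposition \ref{technical-lemma}. The Serre fibration hypothesis should follow from Lemma \ref{etale-category}: the space $\ob(M \wr \hcobd)$ is the space of morphisms in $\hcobd$ with source $M$, i.e. the fiber of $s\colon \mor\hcobd \to \ob\hcobd$ over $M$ — more precisely $\ob(M\wr\hcobd)$ maps to $\ob\hcobd$ via the target map, and this map is the pullback along $\{M\} \hookrightarrow \ob\hcobd$ of the source-target map $(s,t)\colon \mor\hcobd \to \ob\hcobd \times \ob\hcobd$ followed by the second projection; since $(s,t)$ is a Serre fibration by Lemma \ref{etale-category} and Serre fibrations are stable under pullback, the map $\ob(M\wr\hcobd)\to\ob\hcobd$ is a Serre fibration. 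The second hypothesis is where the dimension restriction $d\geq 7$ enters: by Proposition \ref{hcod-is-a-grpd}, every morphism $f\colon Y \to Z$ in $\hcobd$ is invertible in $\pi_0\hcobd$, so there is a morphism $g\colon Z\to Y$ with $g\circ f \simeq \id_Y$ and $f \circ g \simeq \id_Z$ at the level of path components. One then argues that post-composition with such $f$ is a weak equivalence by exhibiting a homotopy inverse. The expected approach: $g_* f_* = (g\circ f)_*$ and $f_* g_* = (f\circ g)_*$, and one shows that composition with a morphism that is equal in $\pi_0\hcobd(\cdot,\cdot)$ to the appropriate cylinder induces a map homotopic to the identity on $\hcobd(M,-)$; this uses that changing a cobordism within its path component (here, replacing $g\circ f$ by the cylinder $Y\times[0,1]$) changes the induced self-map of $\hcobd(M,Y)$ only up to homotopy, together with the fact (already used in Lemma \ref{thin-identities}) that composition with the cylinder $Y\times[0,1]$ is a weak equivalence on $\hcobd(M,Y)$.

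The main obstacle I anticipate is the second hypothesis — showing that $f_*$ is a weak equivalence for every morphism $f$, not merely that $f$ is invertible up to $\pi_0$. The subtlety is that Proposition \ref{hcod-is-a-grpd} only produces an inverse in $\pi_0\hcobd$, whereas we need a genuine homotopy inverse at the space level for $f_*$. The key point making this work is that for the purposes of Proposition \ref{technical-lemma} we only need $f_*$ to be a \emph{weak equivalence}, and a self-map of a space that becomes the identity after composing on each side with a weak equivalence is itself a weak equivalence by a two-out-of-six / retract argument. So the argument should run: pick $g$ with $[g\circ f] = [Y\times[0,1]]$ and $[f\circ g] = [Z\times[0,1]]$ in the respective $\pi_0$; then $g_*\circ f_*$ and $f_*\circ g_*$ each agree, up to homotopy, with composition by a cylinder (using that homotopic — indeed $\pi_0$-equal — morphisms induce homotopic post-composition maps, which follows from the topology on $\mor\hcobd$ and the definition of composition), and composition by a cylinder is a weak equivalence as in Lemma \ref{thin-identities}; a formal argument then gives that $f_*$ is a weak equivalence. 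Assembling these pieces and invoking Lemma \ref{thin-identities} completes the proof; everything beyond the verification of the second hypothesis is a direct application of Proposition \ref{technical-lemma}.
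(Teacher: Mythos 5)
Your proposal is correct and follows essentially the same route as the paper: verify the two hypotheses of Proposition~\ref{technical-lemma} via Lemma~\ref{etale-category} (pullback stability of Serre fibrations for the source map) and Proposition~\ref{hcod-is-a-grpd} (invertibility in $\pi_0\hcobd$), then cancel the contractible factor $B(M\wr\hcobd)$ using Lemma~\ref{thin-identities}. In fact your treatment of the second hypothesis is a useful elaboration of the paper's terse ``it follows easily'': since $\hcobd$ is non-unital, post-composition with a cylinder is only a weak equivalence rather than the identity, so the two-out-of-six style argument you give is precisely what is needed to conclude that $W_*$ is a weak equivalence from $\pi_0$-invertibility.
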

\begin{proof}
We check that the assumptions of Proposition \ref{technical-lemma} are satisfied. The projection $\ob (M\wr\hcobd) \to \ob(\hcobd)$ is a Serre fibration because the source-target map $(s,t)$ is a Serre fibration by Lemma \ref{etale-category}.
Moreover, by Lemma \ref{thin-identities}, the space  $B(M \wr \hcobd)$ is weakly contractible. Lastly, any  $h$-cobordism $W\colon N\to P$ in $\hcobd$ is invertible by Lemma \ref{hcod-is-a-grpd}, and it follows easily that the induced map
\[W_*\colon \hcobd(M, N)\to \hcobd(M,P)\]
is a homotopy equivalence of spaces, with right (left) homotopy inverse induced by a right (left) inverse of $W$. 
\end{proof}

\section{Comparison with the $h$-cobordism space}\label{sec:h_cob_space}

\subsection{\normalfont \textit{Outline}}  \label{outline}

We start by giving an informal definition of the terms appearing in Theorem \ref{thm:main1}. We fix a vector bundle $\theta = (V \to X)$ of rank $d$. 
Given $(d-1)$-dimensional compact smooth manifolds $M$, $N$ with $\theta$-structures $l_M$ and $l_N$, a \emph{$\theta$-embedding} from $M$ into $N$ is an embedding $\iota \colon M\to \inter(N)$ 
together with a bundle homotopy from $l_M$ to $l_N \circ (\id \oplus D(\iota))$. It is called an \emph{$\thetah$-embedding} if, moreover, the complement $N- \inter\big(\iota(M)\big)$ is 
an $h$-cobordism from $\iota(\dell M)$ to $\dell N$. 

Given a $\dell$-trivial $h$-cobordism $(W;M,N)$ and a $\theta$-structure on $M$, we first note that $W$ and hence $N$ inherit $\theta$-structures from the  one on $M$. Then given 
an $\thetah$-embedding $\iota \colon M \to \inter(N)$, we can construct an endomorphism of $M$ in $\hcobd$ by introducing corners in $W$ at $\dell M = \dell N$ and at $\iota(\dell M)$. This construction 
defines the map (which depends on $W$)
\[\Emb^\sim_\theta(M,N)\to  \hcobd(M,M) \xrightarrow[\text{Prop.} \ref{h-cob-cat}]{\simeq} \Omega_M B\hcobd\]
that appears in the statement of Theorem \ref{thm:main1}.

On the other hand, given any morphism $W\colon M\to N$ in $\hcobd$, after straightening the corners and forgetting the $\theta$-structure, we may view $W$ as an $h$-cobordism on $M$, 
whose free end is $\dell_h W\cup N$. Applying this construction to the case $N=M$ defines the second map of Theorem \ref{thm:main1},
\[\Omega_M B\hcobd \xleftarrow[\text{Prop.} \ref{h-cob-cat}]{\simeq}  \hcobd(M,M)  \to \Hdiff(M).\]

\medskip

In order to analyze the homotopy fiber of this map $\hcobd(M,M)\to \Hdiff(M)$, we first introduce a common setting in which to compare the given data. This is done in subsection \ref{subsec:convenient_models} where convenient models for these two spaces are constructed in terms of \emph{cornered $\partial$-trivial $h$-cobordisms}. In the case without a tangential structure, both spaces are disjoint unions of classifying spaces of certain diffeomorphism groups, so the homotopy fiber is 
a disjoint union of (homotopy) quotients; this space can be directly compared to the space of $h$-embeddings through the action of the diffeomorphism groups. This is explained in subsection 
\ref{subsec:without_theta} and proves Theorem \ref{thm:main1} in the case without a tangential structure. In subsection \ref{subsec:with_theta}, we conclude from this the statement of 
Theorem \ref{thm:main1} in the general case. Finally, subsection \ref{subsec_proof_of_thm2} is devoted to the proof of Theorem \ref{thm:main2}.

\newcommand{\tT}{\widetilde{\mathcal{T}}_M}
\newcommand{\R}{\mathcal R_{M, N}}
\newcommand{\tR}{\widetilde{\mathcal R}_{M, N}}
\newcommand{\Rfib}{\mathcal R_{M, N}^W}

\newcommand{\hq}{/\!/}
\newcommand{\qh}{\backslash \!\backslash}

\subsection{\normalfont \textit{The convenient models}}\label{subsec:convenient_models} 
Let $M$ be a compact smooth $(d-1)$-manifold with boundary $\partial M$. A \emph{$\dell$-trivial $h$-cobordism on $M$} is a compact smooth 
$d$-manifold $W$ whose boundary splits as a union 
$$\dell W = M\cup N$$ 
along a smooth submanifold of codimension one which is the common boundary of $M$ and $N$, such that both inclusions $M\to W \leftarrow N$ are homotopy equivalences. We call $N \subset W$ the \emph{free boundary part} of the $\partial$-trivial $h$-cobordism $W$.  
Following \cite{WJR}, the \emph{$h$-cobordism space} $\Hdiff(M)_{\bullet}$ is a simplicial set where a 
$p$-simplex is a smooth bundle $\pi \colon E \to \Delta^p$ with a trivial sub-bundle $M \times \Delta^p \subseteq E$ such that for each $x \in \Delta^p$, $\pi^{-1}(x)$ is a 
$\partial$-trivial $h$-cobordism on $M \cong M \times \{x\}$. This is a classifying space for bundles of $\partial$-trivial $h$-cobordisms on $M$, i.e., there is a weak equivalence
\begin{equation} \label{h-cob_space} 
|\Hdiff(M)_{\bullet}| \simeq \coprod_{W\in\tT} B \Diff(W; M) 
\end{equation}
where $\tT$ is a set of $\partial$-trivial $h$-cobordisms on $M$ containing precisely one element from each diffeomorphism class (relative to $M$), and $\Diff(W;M)$ is the topological 
group of diffeomorphisms of $W$ that are the identity near $M$ and are cylindrical in a fixed chosen collar of $\dell W \subset W$. 

\medskip

We may change the smooth structure of such a manifold $(W; M, N) \in \tT$, by introducing codimension 2 corners at the boundary of $M$. In more detail, we start by choosing a neat embedding of $W$ into $\corn{n+d}{1}=\RR_+\times \RR \times \RR^{n+d-2}$ such that $M$ is the part lying in $\{0\}\times \RR_+\times \RR^{n+d-2}$. Then we apply the inverse of the straightening procedure discussed in the Appendix, after first identifying $\RR_+\times \RR$ with $(0,\infty)\times \RR_+\subset\RR_+\times \RR_+$ in a way such that $\{0\}\times \RR_+$ corresponds to $(0,1]\times \{0\}$. 

The result of this procedure, denoted by $\Psi(W)$, defines  a \emph{cornered $\dell$-trivial $h$-cobordism on} $M$, that is, a compact smooth manifold with corners (of codimension 2), neatly embedded into $\corn{n+d}{2}$, such that the inclusion of each of the boundary parts $M$ and $N$ into $\Psi(W)$ is a homotopy equivalence. Again, we call $N \subset \Psi(W)$ the \emph{free boundary part} of the cornered $\partial$-trivial $h$-cobordism $\Psi(W)$.  Let $\Diff(\Psi(W); M)$ denote the topological group of diffeomorphisms of $\Psi(W)$ that are the identity near $M$ and are cylindrical at the collars of the boundary parts.

This procedure defines a surjective map $\tT\to \T$ where $\T$ is a set of cornered $\partial$-trivial $h$-cobordisms on $M$. 

\begin{lem}\label{lem:comparison_of_diff_1}
Two $\dell$-trivial $h$-cobordisms $W$, $W'$ are diffeomorphic relative to $M$ if and only if $\Psi(W)$ and $\Psi(W')$ are diffeomorphic relative to $M$. Furthermore, we have an isomorphism of topological groups
\[\Diff(W;M) \xrightarrow{\cong} \Diff(\Psi(W); M).\]
\end{lem}
\begin{proof}
A diffeomorphism $\varphi\colon W\to W'$ induces a map $\Psi(\varphi)\colon \Psi(W)\to \Psi(W')$ by conjugating with the canonical homeomorphisms $W\to \Psi(W)$ and $W'\to\Psi(W')$. Then $\Psi(\varphi)$ is a diffeomorphism outside the corner region. Furthermore, $\varphi$ is cylindrical at the collars of $\dell W\subset W$ and of $\dell M\subset \dell W$ if and only if $\Psi(\varphi)$ is cylindrical at the collars of the boundary parts. This implies that collared diffeomorphisms $W\to W'$ correspond to collared diffeomorphisms $\Psi(W)\to \Psi(W')$. This shows the first claim. 

The map in the statement is given by the same rule and therefore defines a bijection; it is a homeomorphism by (the argument of) Lemma A.2.
\end{proof}

It follows that $\T$ contains precisely one element from each diffeomorphism class of cornered $\partial$-trivial $h$-cobordisms relative to $M$. Moreover, we have:
\begin{equation}\label{eq:working_model_1}
 |\Hdiff(M)_{\bullet}| \simeq \coprod_{W\in \T} B\Diff(W; M)
\end{equation}
where $W$ denotes a cornered $\partial$-trivial $h$-cobordism in $\T$. This is the model for the $h$-cobordism space that we will use for the comparison with the $h$-cobordism category.

We now discuss our model for an endomorphism space in the $h$-cobordism category $\hcobdno$. Let $M_0 = (M, e_M \colon M \subset \corn{\infty}{1})$ and $N_0 = (N, e_N \colon N \subset \corn{\infty}{1})$ be a pair of objects in $\hcobdno$. Recall that  
\[\hcobdno(M_0, N_0) \simeq \coprod_{W\in \tR} B\Diff(W; M \amalg N)\]
where $\tR$ is a set of $h$-cobordisms $W$ from $M$ to $N$, containing precisely one element from each diffeomorphism class relative to $M$ and $N$. Similarly as above, we may change the smooth structure of such a $W$, by straightening the corners at $\dell N$. The result, denoted by $\Phi(W)$, is a cornered $\dell$-trivial $h$-cobordism on $M$, whose free boundary part $N'$ is the union of $N$ and the horizontal boundary $\dell_h W$. Furthermore, this cornered $\partial$-trivial $h$-cobordism comes equipped with an additional codimension 0 embedding (namely, the inclusion)
\[\iota_N\colon N\to \inter(N')\]
with a collar, that is, a smooth extension to an embedding of $N\cup_{\dell N} \partial(N)\times [0,\eps)$ (this arises from the collar of $\dell N$ in the horizontal boundary part of $W$). 

This embedding $\iota_N$ is an $h$-embedding in the following sense:

\begin{defn}
Let $N$ and $N'$ be smooth compact $(d-1)$-dimensional manifolds. An \emph{$h$-embedding} from $N$ to $N'$ is a smooth embedding $\iota \colon N \to \inter(N')$ such that 
$N' - \inter(\iota(N))$ is an $h$-cobordism from $\partial N'$ to $\partial N$. (In particular, the embedding $\iota$ is a homotopy equivalence.) We denote by 
$\Emb^{\sim}(N, N') \subseteq \Emb(N, N')$ the \emph{space of} $h$-\emph{embeddings} from $N$ to $N'$. 
\end{defn}

A \emph{cornered $\partial$-trivial $h$-cobordism on $M$ with an embedding of $N$} is a pair $(W, \iota_N)$, where $(W; M)$ is a cornered $\partial$-trivial $h$-cobordism on $M$, and $\iota_N$ is a collared $h$-embedding of $N$ into the free boundary part of $W$. The above procedure defines a surjective map
\[\tR\to \R\]
where $\R$ is a set of cornered $\partial$-trivial $h$-cobordisms on $M$ with an embedding of $N$.  

The proof of the following Lemma is similar to the proof of Lemma \ref{lem:comparison_of_diff_1}. 

\begin{lem}\label{lem:comparison_of_diff_2}
Two $h$-cobordisms $W$, $W'$ from $M$ to $N$ are diffeomorphic relative to $M$ and $N$ if and only if there is  a diffeomorphism  $\Phi(W)\to \Phi(W')$ relative to $M$ that sends $\iota_N$ to $\iota'_N$ and is cylindrical  in the collar of $\iota_N$ and $\iota'_N$.  Furthermore, we have an isomorphism of topological groups
\[\Diff(W;M,N) \cong \Diff(\Phi(W); M, \iota_N)\]
where $\Diff(\Phi(W); M, \iota_N)$ is the group of diffeomorphisms of $\Phi(W)$ that are the identity near $M$ and $\iota_N(N)$. 
\end{lem}

As a consequence, $\R$ contains precisely one element from each diffeomorphism class of cornered $\partial$-trivial $h$-cobordisms with an embedding of $N$; where diffeomorphisms in this context are supposed to  be relative to $M$ and $\iota_N$, and cylindrical near $\dell N$. Furthermore,
\begin{equation}\label{eq:working_model_2}
 \hcobdno(M_0, N_0) \simeq \coprod_{(W, \iota_N)\in \R} B\Diff(W; M, \iota_N).
\end{equation}

To compare this model of the morphism space with our model for the $h$-cobordism space, we choose our set $\tR$ in such a way that for each $(W, \iota_N)$ in $\R$, the underlying $W$ 
is contained in $\T$. Then, we have a canonical map which forgets the embedding
\begin{equation*}
\coprod_{(W, \iota_N)\in \R}B\Diff(W; M, \iota_N) \to \coprod_{W\in \T} B\Diff(W;M) 
\end{equation*}
given by the forgetful map $\R \to \T$ and the inclusions $\Diff(W;M, \iota_N)\to \Diff(W;M)$. Under the equivalences in \eqref{eq:working_model_1} and \eqref{eq:working_model_2}, this map corresponds to the map
\[\hcobdno(M,N)\to \Hdiff(M)\]
which was informally described in subsection \ref{outline}.

\subsection{\normalfont \textit{Comparison (without $\theta$-structures)}} \label{subsec:without_theta}
Throughout this subsection, let $(W;M)$ be a cornered $\partial$-trivial $h$-cobordism, whose free boundary part we denote by $N'$, and let $N$ be a compact smooth $(d-1)$-dimensional manifold. 

\smallskip

\noindent We consider the action of $\Diff(W;M)$ on $\Emb^{\sim}(N, N')$ by postcomposition defined in the following way: given $g \in \Diff(W; M)$ and $\iota_N \in \Emb^{\sim}(N, N')$, 
$$g \circ \iota_N = (N \stackrel{\iota_N}{\hookrightarrow} \partial W - M \xrightarrow{g_{|\partial W - M}} \partial W - M).$$
For any $h$-embedding $\iota_N\colon N\to \inter(N')$, restricting the action above to the orbit of the element $\iota_N \in \Emb^{\sim}(N,N')$, we obtain an action map:
\begin{equation} \label{action_map} 
(-\circ \iota_N) \colon \Diff(W; M) \to \Emb^{\sim}(N, N'). 
\end{equation}

\begin{lem} \label{comparison_of_diff_spaces-1}
The action map $(- \circ \iota_N)$ is a fiber bundle and there is a homotopy pullback square:
$$
\xymatrix{ 
\Diff(W; M,  \iota_N) \ar[d] \ar[r] & \Diff(W;M) \ar[d]^{(-\circ \iota_N)} \\
\ast \ar[r]_{\iota_N} & \Emb^{\sim}(N, N').
}
$$
\end{lem}
\begin{proof}
The action map is a fiber bundle projection as a consequence of the isotopy extension theorem (see also \cite{Lima}). There is a canonical homomorphism from 
$\Diff(W;M, \iota_N)$ into the stabilizer $D$ of $\iota_N \in \Emb^{\sim}(N, N')$ with respect to the $\Diff(W;M)$-action. This map is not quite a homeomorphism, because 
the elements in $\Diff(W;M,\iota_N)$ agree with the identity in a collar neighborhood of $\iota_N(N)$. However, it is a homotopy equivalence, where a homotopy inverse is obtained 
by conjugation with a diffeomorphism of $W$ that spreads $\iota_N(N)$ slightly into a collar neighborhood inside $\dell W$. 
\end{proof}

\medskip

\noindent \textbf{Notation.} We recall some standard notation. Given a topological group $G$ and a left (resp. right) $G$-space $X$, we denote the space of homotopy orbits by 
$G \qh X  : = EG \times_G X$ (resp., $X \hq G$). We recall that there is a canonical fiber sequence as follows: $X \to  X\hq G \to  *\hq G = BG$. For a subgroup $H$, the space $G\hq H$ has a left $G$-action, and we have a weak equivalence: $G \qh (G\hq H) \simeq BH$.

\medskip

We denote by $$\Rfib \subset \R$$ the preimage of $W \in \T$ under the forgetful  map $\R \to \T$, that is, $\Rfib$ consists of those $(W', \iota_N)$ with $W'=W$.

\begin{prop} \label{comparison_of_diff_spaces-2}
The action maps $\{(- \circ \iota_N)\}_{(W, \iota_N) \in \Rfib}$  define a weak equivalence of $\Diff(W;M)$-spaces:
$$\coprod_{\Rfib} \Diff(W;M) \hq \Diff(W; M, \iota_N) \xrightarrow{\simeq} \Emb^{\sim}(N, N').$$
\end{prop}
\begin{proof}
The map is given by the action map on each summand, and passing to homotopy orbits; it is clearly $\Diff(W;M)$-equivariant. Let $\Emb^{\sim}(N, N')_{\iota_N}$ denote the path-components of $\Emb^{\sim}(N, N')$ which are in the image of the action map \eqref{action_map}. By Lemma \ref{comparison_of_diff_spaces-1}, we obtain a homotopy fiber sequence:
$$
\coprod_{\Rfib} \Diff(W; M,  \iota_N) \to \coprod_{\Rfib} \Diff(W;M) \xrightarrow{\coprod_{\Rfib}(-\circ \iota_N)}  \coprod_{\Rfib} \Emb^{\sim}(N, N')_{\iota_N}
$$
where the second map is also surjective on $\pi_0$. Therefore the action maps give a weak equivalence
\[\coprod_{\Rfib} \Diff(W;M) \hq \Diff(W; M, \iota_N) \to \coprod_{\Rfib} \Emb^{\sim}(N, N')_{\iota_N}\]
and it suffices to show that the canonical map
$$\coprod_{\Rfib} \Emb^{\sim}(N, N')_{\iota_N} \to \Emb^{\sim}(N, N')$$
is a bijection on $\pi_0$. Surjectivity on $\pi_0$ is clear since $\Rfib$ contains representatives of all orbits of $h$-embeddings $N \to \inter(N')$ as a consequence 
of Lemma \ref{lem:comparison_of_diff_2}. For injectivity, suppose that for two $h$-embeddings 
$\iota_N, \iota'_N \colon N\to  \inter(N')$, and  $g, g' \in \Diff(W;M)$, we have that $g \circ \iota_N$ is isotopic to $g'\circ \iota'_N$. Since \eqref{action_map} is a fiber bundle by Lemma \ref{comparison_of_diff_spaces-1}, it follows that there is $g'' \in \Diff(W;M)$ such that $g''\circ (g \circ \iota_N) = g' \circ \iota'_N$. It follows that $(W; M,\iota_N)$ and $(W; M, \iota'_N)$ are diffeomorphic, and therefore $\iota_N=\iota'_N$, because $\Rfib$ only contains one representative from each diffeomorphism class.  
\end{proof}

\begin{cor} \label{fiber_sequence_1}
There is a diagram: 
$$
\xymatrix{
\coprod_{\Rfib} \Diff(W;M) \hq \Diff(W;M, \iota_N) \ar[d] \ar[r]^(.6){\simeq} & \Emb^{\sim}(N, N') \ar[d] \\
\coprod_{\Rfib} B \Diff(W; M,  \iota_N) \ar[r]^(.45){\simeq} \ar[d] & \Emb^{\sim}(N, N')\hq \Diff(W;M) \ar[d] \\
B \Diff(W;M) \ar@{=}[r]                                       & B\Diff(W;M) 
}
$$
where the vertical rows are the canonical homotopy fiber sequences and the horizontal arrows are homotopy equivalences. 
\end{cor}

\begin{rem} \label{endo_space_model}
Note that the middle horizontal homotopy equivalence in Corollary \ref{fiber_sequence_1} combined with \eqref{eq:working_model_2} yield yet another model for the space 
$\hcobdno(M, N)$, namely
\begin{align*}
\hcobdno(M, N) \simeq \coprod_{(W; M, N') \in \T}  \Emb^{\sim}(N, N')\hq \Diff(W;M).
\end{align*}
Thus, the space of $h$-cobordisms from $M$ to $N$ is equivalent to the space of cornered $\partial$-trivial $h$-cobordisms on $M$, equipped with an additional $h$-embedding of $N$ into the free boundary of the $h$-cobordism.
\end{rem}

\subsection{\normalfont \textit{Comparison (with $\theta$-structures)}} \label{subsec:with_theta}
We now treat the case with $\theta$-structures and obtain an analogue of Corollary \ref{fiber_sequence_1} in this more general case. We fix a vector bundle 
$\theta = (V \to X)$ of rank $d$ over a space $X$. Given a pair of objects in $\hcobdno$,
$$M_0 = (M, e_M \colon M \hookrightarrow \{0\} \times \RR_+ \times \RR^N)$$
$$N_0 = (N, e_N \colon N \hookrightarrow \{0\} \times \RR_+ \times \RR^N),$$
we consider the composite map:
\begin{multline} \label{map_b}
b \colon \hcobdno(M_0, N_0) \simeq \coprod_{\tR} B \Diff(W; M \amalg N)\\ \simeq \coprod_{\tR} \Bun(TW, \theta; l_M)\hq\Diff(W; M \amalg N) 
\to \Bun(\eps \oplus TN, \theta) 
\end{multline}
where $\Bun(TW, \theta; l_M) \simeq \ast$ for every $h$-cobordism $W \in \tR$ and the last map is induced by the restriction map along the canonical bundle map $\eps \oplus TN \to TW$. We will use this map to give 
a description of the morphism spaces in $\hcobd$ in terms of the morphism spaces in $\hcobdno$. We recall that given a pair of objects in $\hcobd$,
$$(M_0, l_M \colon \eps \oplus TM \to \theta)$$
$$(N_0, l_N \colon \eps \oplus TN \to \theta),$$
the space of morphisms $\hcobd(M_0, N_0)$ is identified up to homotopy equivalence with the following space:
\begin{equation} \label{model_for_endo_space_2}
\hcobd((M_0, l_M), (N_0, l_N)) \simeq  \coprod_{\tR} \Bun(TW, \theta; l_M, l_N) \hq \Diff(W; M \amalg N)
\end{equation}
where the coproduct ranges again over the set $\tR$, that is, a set of compact smooth $d$-dimensional (collared) $h$-cobordisms from $M$ to $N$ (with corners), one from each diffeomorphism class.

\begin{lem}\label{lem:theta_no_theta_fiber_sequence}
Let $M_0$ and $N_0$ be objects in $\hcobdno$ as above, and let
\[l_M\colon \eps\oplus TM\to \theta, \quad l_N\colon \eps\oplus TN\to \theta\]
be $\theta$-structrures on $M$ and $N$. Then there is a homotopy pullback square: 
\[
\xymatrix{
\hcobd((M_0, l_M), (N_0, l_N)) \ar[d] \ar[r] & \hcobdno(M_0, N_0) \ar[d]^{b} \\
\ast \ar[r]_{l_N} & \Bun(\eps\oplus TN, \theta)
}
\]
where the top map is the map that forgets the $\theta$-structures.
\end{lem}
\begin{proof}
As explained above, we have the following homotopy equivalences:
\begin{align*}
 \hcobdno(M_0, N_0) &\simeq \coprod_{\tR} *\hq \Diff(W; M \amalg N),\\
 \hcobd((M_0, l_M), (N_0, l_N)) & \simeq \coprod_{\tR} \Bun(TW, \theta; l_M, l_N)\hq \Diff(W; M \amalg N).
\end{align*}
The forgetful map $\hcobd((M_0, l_M), (N_0, l_N)) \to \hcobdno(M_0, N_0)$ is induced by the inclusion
\[\coprod_{\tR} \Bun(TW, \theta; l_M, l_N) \to \coprod_{\tR} \Bun(TW, \theta; l_M) \]
after passing to the homotopy quotients. We recall here that $\Bun(TW, \theta; l_M)\simeq *$, as $W$ is an $h$-cobordism.
Consider the following homotopy pullback square 
$$
\xymatrix{
\coprod_{\tR} \Bun(TW, \theta; l_M, l_N) \ar[r] \ar[d] & \coprod_{\tR} \Bun(TW, \theta; l_M)  \ar[d] \\
\coprod_{\tR} \ast \ar[r]^(.4){\coprod_{\tR} l_N} & \coprod_{\tR} \Bun(\eps \oplus TN, \theta)
}
$$
where the bottom map $l_N$ denotes the constant map at $l_N \in \Bun(\eps \oplus TN, \theta)$. Passing to the homotopy quotients
with respect to the actions of $\Diff(W; \partial_v W)$, we obtain another homotopy pullback square:
$$
\xymatrix{
\hcobd((M_0, l_M), (N_0, l_N))  \ar[r] \ar[d] & \hcobdno(M_0, N_0) \ar[d]^{(\id, b)} \\
\hcobdno(M_0, N_0) \ar[r]^(.35){(\id, l_N)} & \hcobdno(M_0, N_0)\times \Bun(\eps \oplus TN, \theta), 
}$$
and the claim follows since the map $b$ of \eqref{map_b} is the second coordinate of the right vertical map.
\end{proof}

\begin{defn} \label{theta-h-emb}
Let $N$ and $N'$ be compact smooth $(d-1)$-dimensional manifolds, and let 
$$l_N \colon \eps \oplus TN \to \theta, \quad l_{N'} \colon \eps \oplus TN' \to \theta$$
be bundle maps endowing $N$ and $N'$ with $\theta$-structures. We define 
the \emph{space of} $\thetah$-\emph{embeddings} $\Emb^{\sim}_{\theta}(N, N')$ to be the homotopy fiber at $(l_N \colon \eps \oplus TN \to \theta)$ of the map
$$s_{\theta, N} \colon \Emb^{\sim}(N, N') \longrightarrow \Bun(\eps \oplus TN, \theta), \quad (\iota \colon N \hookrightarrow \inter(N')) \mapsto \big(l_{N'} \circ (\id \oplus D\iota)\big).$$
\end{defn}

\noindent \textbf{Proof of Theorem \ref{thm:main1}.} Let $W$ be a $\dell$-trivial $h$-cobordism on $M$, with free end $N'$, and let $N$ be a compact smooth $(d-1)$-manifold with a $\theta$-structure $l_N \colon \eps \oplus TN \to \theta$. We use the notation $M_0$ and $N_0$ as before to indicate the corresponding objects in $\hcobdno$ and denote by $l_M \colon \eps \oplus TM \to \theta$ the $\theta$-structure on $M$ as an object in $\hcobd$. Note that both $W$ and $N'$ inherit $\theta$-structures from $l_M$. Moreover, the triangle
\[\xymatrix{
 \Emb^{\sim}(N, N') \ar[rr]^{\text{Cor.} \ref{fiber_sequence_1} + \text{Rem.} \ref{endo_space_model}} \ar[rd]_{s_{\theta, N}} && \hcobdno(M_0, N_0) \ar[ld]^b\\
 & \Bun(\eps\oplus TN, \theta)
}\]
is commutative up to a preferred homotopy. Passing to the vertical homotopy fibers at $l_N \in \Bun(\eps \oplus TN, \theta)$ and using Lemma \ref{lem:theta_no_theta_fiber_sequence}, we obtain the following diagram in which the upper square is a homotopy pullback:
\[\xymatrix{
 \Emb^{\sim}_\theta(N, N') \ar[r] \ar[d] 
 & \hcobd((M_0, l_M), (N_0, l_N))  \ar[d]
  \\
 \Emb^{\sim}(N, N') \ar[r] \ar[d]
 & \hcobdno(M_0, N_0) \ar[d]
 \\
  \{*\} \ar[r]^W
  & \Hdiff(M).
}\]
The lower square is a homotopy pullback by Corollary \ref{fiber_sequence_1}. Therefore the composite square is also a homotopy pullback. Then the claim follows from Proposition 
\ref{h-cob-cat}. \qed

\subsection{\normalfont \textit{Tangent bundles as $\theta$-structures}}\label{subsec_proof_of_thm2} 
In this subsection, we specialize our previous results to the case where the $\theta$-structure corresponds to the tangent bundle of a compact smooth $(d-1)$-manifold. 

\smallskip

Let $M$ be a smooth connected compact $(d-1)$-manifold of handle dimension 
$k < d - 3$. Let $\theta_M$ denote the vector bundle $\eps \oplus TM$ over 
$M$ of rank $d$. Note that a $\theta_M$-structure on a compact smooth $d$-cobordism $W$ is a bundle map $TW \to \eps \oplus TM.$ 
Clearly $M$ admits a $\theta_M$-structure which is given by the identity map $\id_{\eps \oplus TM}$.

\begin{prop} \label{connectivity-result}
Let $(W; M, N')$ be a cornered $\partial$-trivial $h$-cobordism on $M$ together with a $\theta_M$-structure $l_W \colon TW \to \eps \oplus TM$ which extends the identity structure on $M$. Then the 
space $\Emb^{\sim}_{\theta_M}(M, N')$ is $(d-2k-3)$-connected. 
\end{prop}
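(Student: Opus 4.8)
The plan is to analyze the space $\Emb^{\sim}_{\theta_M}(M, \partial W - M)$ by comparing it with a corresponding space of honest embeddings, ignoring the $h$-embedding condition, and then control the difference. Recall that $\Emb^{\sim}_{\theta_M}(M, \partial W - M)$ is defined as the homotopy fiber of the map $s_{\theta_M} \colon \Emb^{\sim}(M, \partial W - M) \to \Bun(\eps \oplus TM, \eps \oplus TM)$ over the identity structure $l_M = \id$. I would first recall that $\partial W - M$ is itself an $h$-cobordism from $\partial M$ to $\partial M$, and in particular $\partial W - M$ is homotopy equivalent to $M$, and the inclusion of any $h$-embedded copy of $M$ into $\partial W - M$ is a homotopy equivalence. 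The key input is embedding calculus / handle-dimension estimates: since $M$ has handle dimension $k$ and we are embedding it into the $(d-1)$-manifold $\partial W - M$ (also of dimension $d-1$, but as a codimension-zero situation the relevant estimates come from the handle dimension), the space of embeddings from $M$ into $\partial W - M$ (in the appropriate collared, boundary-respecting sense) is highly connected relative to the space of bundle maps, with connectivity governed by $d - 1 - 2k$ roughly.

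\textbf{Key steps.} First, I would identify $\Emb^{\sim}(M, \partial W - M)$ up to the relevant connectivity with a space of ``abstract'' $h$-embeddings, using that $\partial W - M$ deformation retracts onto a regular neighborhood of an $h$-embedded $M$ and that straightening/isotopy extension lets one move freely. Second — and this is the technical heart — I would invoke a Whitney-trick / general-position argument (or Goodwillie–Klein style embedding estimates, or the parametrized isotopy extension theorem combined with handle induction on $M$) to show that the forgetful map from $\Emb^{\sim}(M, \partial W - M)$ to the space of ``immersions up to homotopy'', i.e. to $\Bun(\eps \oplus TM, \eps \oplus T(\partial W - M))$, is roughly $(d - 1 - 2k - 1)$-connected: since $2k < d - 3$ this range is positive. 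Third, I would combine this with the contractibility-type statement: the target bundle space $\Bun(\eps \oplus TM, \eps \oplus TM)$ relevant to $s_{\theta_M}$ has the homotopy type of the space of self-bundle-maps of $\eps \oplus TM$, and the composite with the inclusion $\partial W - M \hookleftarrow M$ being a homotopy equivalence means the map on bundle spaces is a weak equivalence. Therefore taking the homotopy fiber of $s_{\theta_M}$ only shifts connectivity in a controlled way, and a bookkeeping of the long exact sequences of the relevant fibration(s) yields that $\Emb^{\sim}_{\theta_M}(M, \partial W - M)$ is $(d - 2k - 3)$-connected.

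\textbf{Main obstacle.} The hard part will be step two: establishing the precise connectivity of the forgetful map from the space of $h$-embeddings to the space of bundle maps, with the honest bound $d - 2k - 3$ rather than something weaker. This requires a careful handle-by-handle induction over a handle decomposition of $M$ of dimension $k$, where at each stage one embeds a handle $D^i \times D^{d-1-i}$ (with $i \le k$) into an $h$-cobordism and uses general position to remove intersections — the Whitney trick is available since $d - 1 \ge 6$ and the relevant submanifolds are simply connected after the homotopy-equivalence hypotheses, or one argues by the connectivity of the inclusion of the complement. One must also check that the $h$-embedding condition (that the complement remains an $h$-cobordism) is an open and ``homotopically cofinal'' condition within the space of all embeddings in the relevant range, so that imposing it does not lower connectivity; this follows because being an $h$-cobordism is detected on $\pi_0$ and on homology with local coefficients, which are locally constant in families. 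A clean way to package all of this is to cite the parametrized surgery/embedding results underlying the stable parametrized $h$-cobordism theorem, but a direct handle-induction argument is also feasible and I would present that.
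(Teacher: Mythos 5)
Your overall strategy is essentially the one the paper follows: factor $s_{\theta_M, N}$ (with $N = \partial W - M$) through immersion theory, use a handle-dimension estimate to get high connectivity of the embedding-to-bundle-map comparison, observe that the last bundle-map comparison is a weak equivalence since $l_W$ restricts to a homotopy equivalence $N \to M$, and then shift connectivity by one when passing to the homotopy fiber. Two comments on where you diverge in execution. First, for the key estimate you propose to re-derive the $(d-2k-2)$-connectivity of $\Emb \to \Imm$ by a handle-by-handle induction with the Whitney trick; the paper instead cites this directly from Weiss \cite{Weiss(99)} and Goodwillie--Weiss \cite[Corollary 2.5]{GW}, which is considerably cleaner and avoids the bookkeeping you flag as the ``main obstacle.'' If you do go by hand, you still need a second, separate estimate: after converting to $\Bun(TM, TN)$ via Hirsch--Smale (which requires noting $M$ has no closed components), the stabilization $\Bun(TM, TN) \to \Bun(\eps\oplus TM, \eps\oplus TN)$ is only $(d-k-2)$-connected (from $O_{d-1}\to O_d$ being $(d-2)$-connected and $M$ having homotopy dimension $k$), and one takes the minimum of the two bounds. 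Your ``roughly $d-1-2k-1$'' collapses these two steps and hides where the bound actually comes from. Second, your treatment of the $h$-embedding condition (``open and homotopically cofinal,'' detected on homology with local coefficients) is more elaborate than needed: the paper simply observes that since $k < d-3$ one has $\pi_1(\partial M) \cong \pi_1(M)$, so $\Emb^\sim(M,N)$ and $\Imm^\sim(M,N)$ are just unions of path components of $\Emb(M,N)$ and $\Imm(M,N)$, and restricting to a union of components preserves the connectivity of the map. This sidesteps the local-coefficient homology discussion entirely.
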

\begin{proof}
We first consider the inclusion map $\Emb(M, N') \to \Imm(M, N')$ into the space of immersions. This map
is $(d-2k-2)$-connected by \cite[p.~70]{Weiss(99)},  \cite[Corollary 2.5]{GW}. 
Note that for such homotopical assertions, it suffices to work with the interior of the manifold $M$. By our assumptions on the handle dimension $k < d-3$, it follows that $\partial M$ is also connected and $\pi_1(\partial M) \cong \pi_1(M)$. As a consequence, the space $\Emb^{\sim}(M, N')$ consists exactly of those smooth embeddings which are homotopy equivalences. By restricting to 
the appropriate set of path components, we obtain also a $(d-2k-2)$-connected map $\Emb^{\sim}(M, N') \to \Imm^{\sim}(M,N')$, where $\Imm^{\sim}(M,N')$ is the space of immersions which are also homotopy equivalences. 

By immersion theory and the Hirsch-Smale theorem, the space of immersions $\Imm(M, N')$ is homotopy equivalent to the space $\Bun(TM, TN')$. Note that  $M$ has no closed 
components because it is homotopy equivalent to a space of smaller dimension. As a consequence, we also have $\Imm^{\sim}(M, N') \xrightarrow{\simeq} \Bun^{\sim}(TM, TN')$ where the latter space 
consists of 
those bundles maps which are homotopy equivalences between the bases. 

The space $\Bun(TM, TN')$ includes into the space $\Bun(\eps \oplus TM , \eps \oplus TN')$ by stabilization. This map is $(d-k-2)$-connected since the inclusion 
$O_{d-1} \to O_{d}$ is $(d-2)$-connected and $M$ is $k$-dimensional up to homotopy equivalence. 

Lastly, the map $\Bun(\eps \oplus TM, \eps \oplus TN') \to \Bun(\eps \oplus TM, \eps \oplus TM)$, given by composition with the $\theta_M$-structure on $N'$,
is a homotopy equivalence. This is because this $\theta_M$-structure is a homotopy equivalence between the bases since it is restricted from $l_W$ and $W$ is an 
$h$-cobordism on $M$.

As a consequence, the map $s_{\theta_M, M} \colon \Emb^{\sim}(M, N') \longrightarrow \Bun^{\sim}(\eps \oplus TM, \eps \oplus TM)$
is $(d-2k-2)$-connected. Therefore its homotopy fiber $\Emb^{\sim}_{\theta_M}(M, N')$ is $(d-2k-3)$-connected.
\end{proof}

\noindent \textbf{Proof of Theorem \ref{thm:main2}.} As before, we fix an embedding $e_M \colon M \hookrightarrow \{0\} \times \RR_+ \times \RR^{\infty}$ and consider $(M, e_M, \id_{\eps \oplus TM})$ as an object of $\hcobd$. Then the result follows by combining Proposition \ref{connectivity-result} with Theorem \ref{thm:main1}. \qed

\appendix

\section{Straightening corners} \label{appendix}

The goal of this appendix is to give a rigorous model for the straightening procedure used in this article. 

Let $I:=[0,1]$ and $\RR_+:=[0,\infty)$. Choose, once and for all, a homeomorphism 
\begin{equation}\label{eq:bending_map}
 \Phi\colon I\times \RR_+ \to \RR_+\times \RR_+
\end{equation}
which is a diffeomorphism except at the corner point $(1,0)$, and which is prescribed in a neighborhood of the boundary as follows:
\begin{enumerate}
 \item\label{item:straight1} in a neighborhood of $\{0\} \times \RR_+ $ and of $[0,1)\times \{0\}$, $\Phi$ is the identity;
 \item\label{item:straight2} in a neighborhood of $\{1\}\times (0,\infty)$, $\Phi$ is rotation by $\pi/2$;  \item\label{item:straight3} in a neighborhood of the corner point $(1,0)$, $\Phi$ preserves the radial coordinate and scales the angle by a diffeomorphism
 \[\alpha\colon [0,\pi/2] \to [0,\pi] \]
 where $\alpha(t)=t$ on $[0,\pi/6]$, and $\alpha(t)=t+\pi/2$ on $[\pi/3, \pi/2]$. Here the angle is measured clockwise from (1,0).
\end{enumerate}

\begin{rem}
A more naive condition in (\ref{item:straight3}) would be that $\alpha$ just doubles the  angle. But  this is inconsistent with conditions (\ref{item:straight1}) and (\ref{item:straight2}).
\end{rem}

We will show at the end of this section that such a map  exists, and, moreover, that the space of all such maps, equipped with a suitable topology, is contractible. 

\smallskip

For the ease of notation, we will also write $\Phi$ for any map of the type $\Phi\times \id_{\RR^n}$. With this convention in mind, if $W\subset I\times \RR_+\times \RR^{n-2}$ is a compact neat submanifold, then 
\[\Phi(W)\subset \RR_+\times \RR_+ \times \RR^{n-2}\]
is a again neat submanifold. Note that $\Phi(W)$ and $W$ are homeomorphic. They are diffeomorphic except at the region $\{1\}\times \{0\} \times \RR^{n-2}$, where $W$ has corner points but $\Phi(W)$ does 
not. Moreover, if 
\[e\colon W\to I\times\RR_+\times \RR^{n-2}\]
is a neat embedding, then 
\[\Phi\circ e\circ \Phi\inv\colon \Phi(W)\to \RR_+\times \RR_+\times \RR^{n-2}\]
is also a smooth neat embedding. 

\begin{lem}\label{lem:bending_continuous}
Let $W \subset I \times \RR_+ \times \RR^{n-2}$ be a compact neat $d$-dimensional submanifold (possibly with corners). Then the map
\begin{equation}\label{eq:bending_map_on_embedding}
\Phi\circ - \circ \Phi\inv\colon \Emb(W, I\times \RR_+\times \RR^{n-2}) \to \Emb(\Phi(W), \RR_+\times \RR_+\times \RR^{n-2})
\end{equation}
is continuous.
\end{lem}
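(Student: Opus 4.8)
The plan is to reduce the continuity of the conjugation map to the continuity of composition maps on spaces of smooth embeddings, exploiting the fact that $\Phi$ is a genuine diffeomorphism away from the single corner point, while near the corner point the conjugated embedding is still smooth (this is the content of the discussion preceding the lemma: $\Phi\circ e\circ \Phi^{-1}$ is a smooth neat embedding even though $\Phi$ is not a diffeomorphism at $(1,0)$). Concretely, I would first fix a collar neighborhood structure and observe that the $C^\infty$-topology on $\Emb(\Phi(W),\RR_+\times\RR_+\times\RR^{n-2})$ is detected by the $C^\infty$-topology on compact pieces, so it suffices to check continuity of $e\mapsto \Phi\circ e\circ\Phi^{-1}$ on each piece separately: an ``interior'' piece supported away from $\{1\}\times\{0\}\times\RR^{n-2}$, where $\Phi$ restricts to an honest diffeomorphism, and a ``corner'' piece supported in a small neighborhood of that locus.

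On the interior piece the claim is routine: conjugation by a fixed diffeomorphism is continuous in the $C^\infty$-topology, since it acts on jets by a fixed smooth formula (precomposition with $\Phi^{-1}$ and postcomposition with $\Phi$, both smooth on the relevant region). For the corner piece I would use the collaring condition on neat submanifolds and embeddings: near $\{1\}\times\{0\}\times\RR^{n-2}$ every $W$ in the relevant diffeomorphism class looks like $\corn{d}{2}\times\RR^{n-d}$ and every neat embedding $e$ is, by definition, cylindrical in both collar coordinates there, so $e$ is determined on this neighborhood by its restriction to the corner codimension-$2$ stratum together with the (fixed) product structure. Conjugating by $\Phi$ then amounts to applying the fixed homeomorphism $\Phi$ to the two collar coordinates while leaving the stratum-direction data untouched; since $\Phi$ is smooth in polar coordinates near the corner (it preserves the radial coordinate and reparametrizes the angle by the fixed diffeomorphism $\alpha$), the resulting operation on the smooth embedding data is again given by composition with fixed smooth maps in suitable (polar) charts, hence continuous in the $C^\infty$-topology. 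The key point making this legitimate is precisely that, after the bending, the image $\Phi(W)$ is a \emph{smooth} neat submanifold with no corner, so the target topology is the ordinary $C^\infty$-topology and there is no subtlety about corner jets.

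The main obstacle I expect is the corner analysis: one must verify that the passage to polar coordinates, together with the $\eps$-cylindrical (collar) normal form of neat embeddings guaranteed by the definition in Section \ref{sec:h_cob_cat}, genuinely expresses $e\mapsto\Phi\circ e\circ\Phi^{-1}$ near the corner as composition with fixed smooth maps --- in particular that the smoothness of $\Phi$ at the corner in polar coordinates (clause (\ref{item:straight3}) of the construction) is enough to control all derivatives of the conjugated embedding uniformly on compacta, and that the colimit over shrinking collar widths $\eps$ does not cause trouble (it does not, since each map in the colimit system is continuous and the colimit is taken in the appropriate topological category). Once the continuity is checked piece by piece, a partition-of-unity / gluing argument combines the pieces, using that the $C^\infty$-topology is the coarsest making all the restriction maps to compact pieces continuous, and the lemma follows.
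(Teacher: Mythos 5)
Your overall strategy --- split into a piece away from the corner where $\Phi$ is a diffeomorphism, and a piece near the corner where the cylindrical collar structure of $\eps$-neat embeddings does the work --- is the right one and is in spirit what the paper does, but the mechanism you propose for the corner piece has a genuine gap. You write that conjugation ``amounts to applying the fixed homeomorphism $\Phi$ to the two collar coordinates'' and then argue that this is harmless because ``$\Phi$ is smooth in polar coordinates near the corner.'' This does not suffice: polar coordinates are not a smooth chart at the origin, and in Cartesian coordinates $\Phi$ is genuinely not differentiable at $(1,0)$ (angle-doubling is of the type $z\mapsto z^2/|z|$). If the continuity of the corner contribution to $e\mapsto\Phi\circ e\circ\Phi^{-1}$ really rested on composing with $\Phi$ as a fixed map, the $C^\infty$-seminorms in Cartesian charts on the straightened $\Phi(W)$ would not be controlled, and the argument would break.

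What saves the day is that $\Phi$ never actually acts on the collar piece, because of the very cylindricality you invoke but do not fully exploit. If $e$ is $\eps$-neat, then on the $\eps$-collar of the corner we have $e = \mathrm{id}_{I\times\RR_+}\times e_0$, where $e_0$ is the restriction of $e$ to the codimension-$2$ stratum. Plugging this into $\Phi\circ e\circ\Phi^{-1}$ and using that $\Phi$ acts only on the $I\times\RR_+$ factor, one finds
\[
(\Phi\circ e\circ\Phi^{-1})(x,y,z)\;=\;\Phi\big(\Phi^{-1}(x,y),\,e_0(z)\big)\;=\;(x,y,e_0(z))
\]
on the corresponding collar of $\Phi(W)$: the two copies of $\Phi$ cancel, and the conjugated embedding is again $\mathrm{id}\times e_0$. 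In particular the map $e\mapsto\Phi\circ e\circ\Phi^{-1}$ on the corner piece factors through the (continuous) restriction $e\mapsto e_0$ followed by $e_0\mapsto\mathrm{id}\times e_0$, and no smoothness property of $\Phi$ at the corner is needed at all. This also dispenses with the partition-of-unity step: the $C^\infty$-topology is already defined chart-by-chart via seminorms, so no gluing is required. The paper packages this cancellation cleanly by proving, via a direct seminorm comparison using charts of the special product form near the boundary, that for $\delta$-neat embeddings of $\Phi(W)$ the restriction to the open part $\Phi(\mathrm{int}'\,W)$ is a topological embedding; continuity then follows immediately from the fact that $\Phi$ is an honest diffeomorphism on $\mathrm{int}'\,W$. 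Replacing your polar-coordinate step with either the explicit cancellation computation above or the seminorm comparison would complete your argument.
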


We recall that the topology on the spaces of (neat) embeddings under consideration is such that a map from the embedding space is continuous 
if and only if it is continuous on the restriction to the subspace of $\eps$-neat embeddings, for every  $\eps>0$.

\begin{proof}
\newcommand{\halfinter}{\mathrm{int}'}
We consider the subspace
\[\halfinter W:=W\cap ([0,1)\times (0,\infty)\times \RR^{n-2}).\]
For any $\delta>0$, there is an (injective, continuous) restriction map
\[R\colon \Emb^\delta(\Phi(W), \RR_+\times \RR_+\times \RR^{n-2}) \to \Emb(\Phi(\halfinter W), \RR_+\times (0,\infty)\times \RR^{n-2})\]
where both the domain and the target have the $C^\infty$-topology.

\smallskip

\noindent \textit{Claim.} The map $R$ is a topological embedding. 

\smallskip

This claim will imply the Lemma: choosing $\delta>0$ small enough, we have a commutative square
\[\xymatrix{
 \Emb^\eps(W, I\times \RR_+\times \RR^{n-2}) \ar[rr]^R \ar[d]^{\Phi\circ - \circ \Phi\inv} 
  && \Emb(\halfinter W, [0,1)\times (0,\infty)\times \RR^{n-2}) \ar[d]^{\Phi\circ - \circ \Phi\inv} 
  \\
   \Emb^\delta(\Phi(W), \RR_+\times \RR_+\times \RR^{n-2}) \ar[rr]^R 
   && \Emb(\Phi(\halfinter W), \RR_+\times (0,\infty)\times \RR^{n-2}))
}\]
where the right vertical map is continuous, for it is given by pre-composition and post-composition with a smooth map; then the left vertical map is continuous as well, by the definition of the subspace topology.

To prove the claim, we recall that the topology on the domain of $R$ is given by the family of semi-norms
\[\Vert e\Vert_{\beta,\alpha}:= \sup_{x\in U_\beta} \big\Vert \frac{\partial^\alpha (e\circ h_\beta)}{\partial x_{\alpha}} (x) \big\Vert\]
where $\beta$ runs through the indexing set of a cover of $\Phi(W)$ by compact charts 
\[h_\beta\colon \RR_+\times \RR_+\times \RR^{d-2} \supset U_\beta \to V_\beta\subset \Phi(W),\]
and $\alpha$ runs through the set of multi-indices in $\{1, \dots, d\}$. We may choose such a cover as follows: 
the charts that meet a $\delta/2$-neighborhood of $\Phi(W-\halfinter W)$, are of the form $\bar h_\beta\times \id_{[0,\delta/2]}$ for some chart $\bar h_\beta$ on the boundary. 

In contrast, the topology on the target of $R$ is given by a similar family of semi-norms $\Vert e\Vert_{\beta', \alpha}$ where $\beta'$ now runs through the indexing set of a cover 
of $\halfinter \Phi(W)$ by compact charts. Here we use the following cover: the charts that do not meet the $\delta/2$-neighborhood of $\Phi(W-\halfinter W)$ are precisely 
the same as above; for each chart $\bar h_\beta\times \id_{[0,\delta/2]}$ as above, we choose its countable cover by charts $\bar h_\beta\times \id_{[\delta/n, \delta/2]}$. 

Now assume that $e$ is in the image of $R$. Then, for any $\alpha$, the semi-norms associated to such a countable family of charts agree with the seminorm associated to the chart 
$\bar h_\beta\times \id_{[0,\delta/2]}$, since $e$ is $\delta$-neat. This shows that the family of seminorms on the image of $R$ is equivalent to the family of seminorms on the 
domain of $R$.
\end{proof}

Let $\theta:= (V \to  X)$ be a vector bundle of rank $d$ on a space $X$. Next we discuss how $\Phi(W)$ inherits a $\theta$-structure from $W$. The map 
\[\Phi\colon W\to \Phi(W)\]
is a diffeomorphism except at the set corner points, which we denote by $\dell_{01}W$. This induces a bundle isomorphism
\[D\Phi \colon TW\vert_{W-\dell_{01}W} \to T\Phi(W)\vert_{\Phi(W-\dell_{01}W)}\]
covering $\Phi$. This bundle map does not extend to a bundle map on $TW$ (unless $\dell_{01}W$ is empty). Thus, instead of $\theta$-structures on $W$, it will 
be convenient to work here 
with bundle maps
\[l_W\colon TW \vert_{\inter W} \to \theta.\]
Such a bundle map determines, uniquely up to a contractible choice, a bundle map on all of $TW$, which then defines a $\theta$-structure on $W$. As a consequence, 
a $\theta$-structure $l_W$ on $W$ induces (canonically up to a contractible space of choices) a $\theta$-structure on $\Phi(W)$ that corresponds to the bundle map
\[l_{\Phi(W)}\colon T\Phi(W)\vert_{\inter W} \xrightarrow{D\Phi\inv} TW\vert_{\inter W} \xrightarrow{l_W} \theta.\]
We will omit the details of writing down explicitly the corresponding (zigzag) continuous map between the respective spaces of embeddings of manifolds with $\theta$-structures. 

\begin{rem}
As mentioned in the introduction, we can use this procedure on objects and morphisms to define a stabilization functor on the cobordism category. Note however that, on the level of morphisms spaces, we are not allowed to restrict the bundle map to the interior of the manifold, as this would destroy the source-target map. Instead, we can restrict the bundle map to the ``horizontal interior'', that is, 
the intersection with the subset $[0,a] \times \inter{(\corn{\infty-1+n}{1})}\subset [0,a]\times \corn{\infty-1+n}{1}.$
\end{rem}

Next we show that a map $\Phi$ as postulated in \eqref{eq:bending_map} exists. Let $f\colon \RR\to \RR$ be a smooth function such that $f(\varphi)=0$
for $\varphi\leq \pi/6$ and $f(\varphi)=1$ for $\varphi \geq \pi/3$. Let $\alpha\colon \RR\times \RR\to \RR$ be the smooth function
satisfying the differential equation:
\[\frac{d}{dt} \alpha(t,\varphi) = f(\alpha(t,\varphi)), \quad \alpha(0,
\varphi) =  \varphi\]
for all $(t,\varphi) \in \RR \times \RR$. Then $\alpha:=\alpha(\pi/2,-)$ is a function as stipulated in condition (iii) in the definition of $\Phi$.

Let $V\subset \RR_+\times \RR_+$ be a small neighborhood of $[1,\infty)\times \RR_+$. We define a vector field $X$ on $V-\{(1,0)\}$ in polar coordinates 
at $(1,0)$ via 
\[X(r, \varphi) = (0, f(\varphi)).\]
$X$ vanishes on the intersection of $V$ with a neighborhood of $[0,1)\times \{0\}$. Thus, we can extend it smoothly by $0$ to a
neighborhood of $\{0\}\times \RR_+$ and $[0,1)\times \{0\}$ in $\RR_+ \times \RR_+ - \{(1,0)\}$; and then we extend it further 
to a vector field on all of $\RR_+\times \RR_+ - \{(1,0)\}$ in such a way that the slopes of the vectors are bounded above.

Let us follow the flow of a point in $I\times \RR_+ - \{(1,0)\}$ under this vector field. The flow either converges to a sink or it will eventually meet the axis $\{1\}\times \RR_+$. 
When the latter happens, the flow will continue a radial movement with constant speed in clockwise direction, until it reaches the $x$-axis after time $\pi/2$. 
Therefore, the flow of such a point exists at least for $0\leq t\leq \pi/2$.

If we denote by $\Phi_t \colon I\times\RR_+ - \{(1,0)\} \to \RR_+\times \RR_+$ the
resulting flow, then the map $\Phi \colon = \Phi_{\pi/2}$ continuously extends to $I \times \RR_+$ and has the required properties.

\smallskip

If $\Phi$ and $\Phi'$ are two maps as in \eqref{eq:bending_map}, then 
\[H:= \Phi'\circ \Phi\inv\colon \RR_+\times \RR_+\to \RR_+\times \RR_+\]
is a homeomorphism, diffeomorphism except at $(1,0)$, and there is an $\eps>0$ such that the restriction of $H$ to $\RR_+\times [0,\eps)$ is 
as follows:
\begin{enumerate}
 \item In polar coordinates around $(1,0)$, $H$ preserves the radius and rescales the angle by some self-diffeomorphism $\alpha$ of $[0,\pi/2]$ which is the identity on $[0,\pi/6]$ and $[\pi/3, \pi/2]$.
 \item Outside the cone formed by the rays of angle $\pi/6$ and $\pi/3$ starting at $(1,0)$, $H$ is the identity.
\end{enumerate}

Denote by $D^\eps$ the space  of all such self-homeomorphisms of $\RR_+\times \RR_+$, equipped with the locally convex topology induced by the semi-norms
\[ \Vert H\Vert_{K, \alpha}  := \sup_{x\in K} \big\Vert \frac{\partial^\alpha H}{\partial x_{\alpha}} (x)\big\Vert,\]
where $K$ ranges over the compact subsets of the interior of $\RR_+\times \RR_+$, and $\alpha$ ranges over the multi-indices in $\{1,2\}$.  Finally, we let $D=\colim_{\eps\to 0} D^\eps$.

\begin{lem}
$D$ is contractible.
\end{lem}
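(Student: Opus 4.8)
The plan is to show that $D$ deformation retracts onto the point given by the identity homeomorphism, which lies in $D^\eps$ for every $\eps$. The key observation is that an element $H \in D^\eps$ is completely determined, near the corner point $(1,0)$, by a self-diffeomorphism $\alpha_H$ of $[0,\pi/2]$ fixing $[0,\pi/6]$ and $[\pi/3,\pi/2]$ pointwise (acting radially), while away from the corner $H$ is an arbitrary self-homeomorphism of $\RR_+ \times \RR_+$ which is the identity near the boundary. So I would build the contraction in two independent stages, interpolating the two pieces of data separately, and then pass to the colimit over $\eps \to 0$.

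First I would contract the ``angular'' part. The space of diffeomorphisms $\alpha$ of $[0,\pi/2]$ that are the identity on the two end intervals is convex once we pass to the associated family of first derivatives: write $\alpha(t) = \int_0^t e^{u(\sigma)}\, d\sigma$ after normalizing, so that $\alpha$ corresponds to a function $u$ supported in the open interval $(\pi/6,\pi/3)$ with the single linear constraint $\int e^{u} = $ (length condition); this is a convex condition in a suitable sense, or more simply one uses the standard fact that $\mathrm{Diff}_\partial([0,1])$ (diffeomorphisms fixing a neighborhood of the endpoints) is contractible via the straight-line homotopy $\alpha_t(x) = (1-t)\alpha(x) + tx$, which stays a diffeomorphism since $\alpha_t' = (1-t)\alpha' + t > 0$ and still fixes neighborhoods of the endpoints. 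Translating this through the radial-coordinate description gives a continuous deformation of the corner-germ of any $H \in D^\eps$ to the trivial germ, through elements of $D^\eps$; continuity in the seminorms $\Vert \cdot \Vert_{K,\alpha}$ on compacta of the interior is immediate since the homotopy is smooth in all parameters away from $(1,0)$.

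After the first stage every $H$ in the image agrees with the identity on a neighborhood of $(1,0)$ (in fact on a neighborhood of the whole boundary $\partial(\RR_+\times\RR_+)$), so it is genuinely a self-diffeomorphism of $\RR_+ \times \RR_+$ supported in the interior, i.e. an element of $\mathrm{Diff}_c(\mathrm{int}(\RR_+\times\RR_+)) \cong \mathrm{Diff}_c(\RR^2)$. The second stage is then a contraction of (a subspace of) this group; one can use the Alexander trick in the smooth-compactly-supported setting, or rather — since $\mathrm{Diff}_c(\RR^2)$ is not obviously contractible — note that after the first stage we may additionally assume $H$ is the identity outside a fixed compact region, and conclude by the standard fact that the space of such homeomorphisms supported in a disk, with the compact-open topology, is contractible (Alexander's trick: $H_t(x) = tH(x/t)$ for $t \in (0,1]$, $H_0 = \mathrm{id}$). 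Here one must be slightly careful that the paper only asks for \emph{homeomorphisms} in $D$, so the Alexander trick in the topological category applies directly and there is no smoothness obstruction.

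The main obstacle I anticipate is bookkeeping the interface between the two stages: one must make sure that the first homotopy can be performed through maps that remain honest elements of $D^\eps$ (respecting both boundary conditions simultaneously and staying homeomorphisms everywhere, including at $(1,0)$), and that after it one genuinely lands in a subspace to which Alexander's trick applies with the stated topology — in particular that the colimit topology on $D = \colim_\eps D^\eps$ is handled correctly so the concatenated homotopy is continuous. Once the two pieces are set up so that the first fixes a neighborhood of the boundary and the second is supported away from it, the homotopies commute and can be run in sequence (or simultaneously), yielding a contraction of $D$ onto $\mathrm{id}$.
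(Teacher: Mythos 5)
Your first stage is essentially the paper's first step: a straight-line isotopy from $\alpha$ to $\id$, applied in the radial coordinate at the corner. One technical point you gloss over: you cannot deform ``the corner-germ'' of $H$ in isolation, because the radial normal form only holds on the strip $\RR_+\times[0,\eps)$ and the deformed map must still agree with the unchanged $H$ farther out; one has to let the interpolation parameter depend on the radius (a radial cutoff), which is exactly why the paper states this step as: the inclusion $D^{2\eps}\to D^{\eps}$ is homotopic to a map landing in the subspace $D^{\eps}_{\dell}$ where $\alpha=\id$, and then deduces that each $D^{2\eps}\to D$ is nullhomotopic (this also disposes of your worry about the colimit topology). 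That is fixable.

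The second stage, however, has a genuine gap. You assert that after the first stage one ``may additionally assume $H$ is the identity outside a fixed compact region'' and then contract by the Alexander trick for compactly supported homeomorphisms of a disk. Nothing in the definition of $D^{\eps}$ gives compact support, and the maps that actually arise, $H=\Phi'\circ\Phi\inv$ with $\Phi,\Phi'$ built from flows of vector fields on all of $\RR_+\times\RR_+$, are in general not the identity outside any compact set; no deformation producing compact support is offered, and this is precisely the crux of the statement. In addition, running the Alexander trick ``in the topological category'' is incompatible with the space at hand: elements of $D$ are required to be diffeomorphisms away from $(1,0)$ and the topology is defined by $C^{\infty}$-seminorms on compacta of the interior, so a path through non-smooth homeomorphisms leaves $D$ altogether (and continuity in these seminorms would fail). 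The paper's second step avoids both problems: once $\alpha=\id$, the maps are smooth and equal to the identity on a collar of the boundary, hence define elements of $\Diff^{\eps'}_{\dell}(K)$ for $K\cong\RR_+\times\RR$ obtained by smoothing the corner, and this group is contracted by conjugating with dilations centered at a boundary point, so that the collar on which the map is the identity expands to exhaust $K$; this rescaling isotopy needs identity-near-the-boundary rather than compact support and stays within smooth maps. To repair your argument you would have to replace the Alexander trick by this boundary-collar rescaling (or else actually prove that one can first deform to compactly supported, smooth representatives), at which point you have reproduced the paper's proof.
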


\begin{proof}
The inclusion $D^{2\eps}\to D^\eps$ is homotopic to a map which lands inside the subspace $D^\eps_\dell$ of those maps where $\alpha=\id$. The homotopy is given by taking the straight line isotopy between $\alpha$ and the identity, and applying it in the radial coordinate around $(1,0)$. 

Now note that any element of $D^\eps_\dell$ defines just as well a diffeomorphism of the manifold $K$ obtained from $\RR_+ \times \RR_+$ by smoothing the corners.
In more detail, let $K$ be the manifold with boundary (but without corners) obtained from $\RR_+\times \RR_+$ by straightening the corners, and let $\Diff^\eps_\dell(K)$ denote the space of diffeomorphisms of $K$ which are the identity in an $\eps$-neighborhood of the boundary,
endowed with the $C^\infty$-topology. Then the inclusion of $D^\eps_\dell$ into the colimit factors canonically through some $\Diff^{\eps'}_\dell(K)$. As $K$ is diffeomorphic to $\RR_+\times \RR$, we have $\Diff^\eps_\dell(K)\simeq *$, by the usual rescaling isotopy. 

Hence the  inclusion $D^{2\eps}\to D$ is nullhomotopic, for any $\eps$. 
\end{proof}

\end{document}